\newcommand{\es}{\varnothing}
\title{{\sc Feedback Vertex Set on Chordal Bipartite Graphs}}
\author{
 Ton~Kloks%
\thanks{This author is supported by the
National Science Council of Taiwan, under grant
NSC~99--2218--E--007--016.}
\and
Ching-Hao~Liu\inst{} 
\and 
Sheung-Hung~Poon\inst{}
}
\institute{
 Department of Computer Science\\
 National Tsing Hua University,
 No.~101, Sec.~2, Kuang Fu rd., Hsinchu, Taiwan\\
 {\tt chinghao.liu@gmail.com, spoon@cs.nthu.edu.tw}
}
\begin{document}

\maketitle

\begin{abstract}
Let $G=(A,B,E)$ be a bipartite graph with color classes $A$ and $B$. 
The graph $G$ is chordal bipartite if $G$ has no induced cycle of 
length more than four. Let $G=(V,E)$ be a graph. A feedback vertex 
set $F$ is a set of vertices $F \subset V$ such that $G-F$ is a forest. 
The feedback vertex set problem asks for a feedback vertex set of 
minimal cardinality. We show that the feedback vertex set problem 
can be solved in polynomial time on chordal bipartite graphs. 
\end{abstract}

\section{Introduction}
%%%%%%%%%%%%%%%%%%%%%%

The feedback vertex set problem asks for a 
minimum set of vertices that meets all cycles of a 
graph. 
The feedback vertex set problem is a benchmark problem for  
fixed-parameter algorithms, exact algorithms, approximation 
algorithms, and for 
algorithms on special graph classes~\cite{kn:festa}. In this paper 
we show that the feedback vertex set problem can be solved in 
polynomial time for bipartite graphs without chordless cycles of length 
more than four.  

The feedback vertex set problem can 
be solved in time $O(1.7548^n)$~\cite{kn:fomin}. 
If a graph $G$ 
has a feedback vertex set with at most $k$ vertices then so does 
any minor of $G$. It thus follows from the 
graph minor theorem that the problem is fixed-parameter 
tractable~\cite{kn:niedermeier}. 
It is possible 
to reduce $k$-feedback vertex set to a quadratic kernel in 
polynomial time~\cite{kn:thomasse}. At the moment, the best algorithm 
for $k$-feedback vertex set seems to 
run in time $O(3.83^k kn^2)$~\cite{kn:cao}.  

The feedback vertex set problem is 
NP-complete~\cite{kn:garey} and remains so on bipartite graphs and 
on planar graphs. Recently, it was shown that the 
problem remains NP-complete on tree convex bipartite graphs~\cite{kn:wang}. 
There is a factor two approximation 
algorithm~\cite{kn:becker}.  

Note that the problem can be formulated in monadic second-order logic without 
quantification over subsets of edges and it follows  
that the problem can be solved in $O(n^3)$ time for 
graphs of bounded treewidth or rankwidth~\cite{kn:courcelle}. 
The problem can also be solved in polynomial time on, {\em e.g.\/},  
interval graphs, chordal graphs, permutation graphs,  
cocomparability graphs, convex bipartite graphs, and 
AT-free graphs%
~\cite{kn:kratsch,kn:liang,kn:liang2,kn:lu}. For information 
on various graph classes we refer 
to~\cite{kn:brandstadt,kn:golumbic2,kn:mckee,kn:spinrad}.   
 
\section{Preliminaries}
%%%%%%%%%%%%%%%%%%%%%%%

If $A$ and $B$ are sets then we use $A+B$ and $A-B$ to denote 
$A \cup B$ and $A \setminus B$ respectively. For a set $A$ and an 
element $x$ we also write $A + x$ and $A - x$ instead of 
$A+ \{x\}$ and $A - \{x\}$. 
 
A graph is a pair $G=(V,E)$ where $V$ is a finite, nonempty set and
where $E$ is a set of two-element subsets of $V$. We call the
elements of $V$ the vertices or points of the graph. We denote the
elements of $E$ as $(x,y)$ where $x$ and $y$ are vertices. We call
the elements of $E$ the edges of the graph. If $e=(x,y)$ is an edge
of a graph then we call $x$ and $y$ the endpoints of $e$ and we say
that $x$ and $y$ are adjacent. The neighborhood of a vertex $x$ is
the set of vertices $y$ such that $(x,y) \in E$. We denote this
neighborhood by $N(x)$. The closed neighborhood of a vertex 
$x$ is defined as $N[x]=N(x)+x$. The degree of a vertex $x$ is the
cardinality of $N(x)$. Let $W \subseteq V$ and let $W \neq \es$. The
graph $G[W]$ induced by $W$ has $W$ as its set of vertices and it
has those edges of $E$ that have both endpoints in $W$. If $W
\subset V$, $W \neq V$, then we write $G-W$ for the graph induced by
$V \setminus W$. If $W$ consists of a single vertex $x$ then we
write $G-x$ instead of $G-\{x\}$. We usually denote the number of
vertices of a graph by $n$ and the number of edges of a graph by $m$.

A path is a graph of which the vertices can be linearly
ordered such that the pairs of consecutive vertices form the
set of edges of the graph. We call the first and last vertex in this
ordering the terminal vertices of the path.
We denote a path with $n$ vertices by $P_n$. To denote a specific
ordering of the vertices we use the notation $[x_1,\ldots,x_n]$.
Let $G$ be a graph. Two vertices $x$ and $y$ of $G$ are connected by a
path if $G$ has an induced subgraph which is a path with $x$
and $y$ as terminals. Being connected by a path is an equivalence relation
on the set of vertices of the graph.
The equivalence classes are called the
components of $G$.
A cycle consists of a path with at least three vertices with
one additional edge that connects the two terminals of the path.
We denote a cycle with $n$ vertices by $C_n$.
The length of a path or a cycle is its number of edges.

A clique in a graph is a nonempty subset of
vertices such that every pair in it is adjacent.
An independent set in a graph is a nonempty subset of
vertices with no edges between them.
A graph $G=(V,E)$ is bipartite if there is a partition $\{A,B\}$ 
of the set of vertices $V$ 
in two independent sets. One part of the partition may be empty. 
We use the notation $G=(A,B,E)$ to denote a bipartite graph with 
independent sets $A$ and $B$ and we call $A$ and $B$ the 
color classes of $G$. 
A bipartite graph $G=(A,B,E)$ is complete bipartite if any two vertices
in different color classes are adjacent. 

\begin{definition}
A bipartite graph $G=(A,B,E)$ is {\em chordal bipartite\/} if 
it has no induced cycle of length more than four. 
\end{definition}

Chordal bipartite graphs were introduced in~\cite{kn:golumbic}. 
They are characterized by the property of having a `perfect edge 
without vertex elimination ordering.' 

\begin{definition} 
Let $G=(A,B,E)$ be a bipartite 
graph. An edge $e=(x,y)$ is {\em bisimplicial\/} if 
$N(x) \cup N(y)$ induces a complete bipartite subgraph in $G$. 
\end{definition}

Let $G=(A,B,E)$ be a bipartite graph and let $(e_1,\ldots,e_m)$  
be an ordering 
of the edges of $G$. Define $G_0=G$, $E_0=E$ and 
for $i=1,\ldots,m$ define $G_i=(A,B,E_i)$ as the graph 
with $E_i=E_{i-1}-\{e_i\}$. 
Thus $G_i$ is obtained from $G_{i-1}$ by removing the edge $e_i$ but not 
its endvertices. 
The ordering $(e_1,\ldots,e_m)$ is a perfect edge without vertex elimination 
ordering if $e_i$ is bisimplicial in $G_{i-1}$ for $i \in \{1,\ldots,m\}$. 

\begin{theorem}[\cite{kn:brandstadt,kn:kloks}]
\label{char}
A bipartite graph is chordal bipartite if and only if it has a perfect 
edge without vertex elimination ordering.
\end{theorem}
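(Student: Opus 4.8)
The plan is to treat the two implications separately, proving the ``if'' direction by a short contradiction argument and the ``only if'' direction by induction on the number of edges $m$. Throughout I write $x\in A$ and $y\in B$ for the endpoints of an edge $e=(x,y)$ and use the defining property of a bisimplicial edge: every vertex of $N(x)$ is adjacent to every vertex of $N(y)$. For the ``if'' direction, suppose $(e_1,\dots,e_m)$ is a perfect edge without vertex elimination ordering but $G$ has an induced cycle $C$ of length at least six. Let $e_i$ be the first edge of the ordering lying on $C$. Since $e_1,\dots,e_{i-1}$ are not edges of $C$, all edges of $C$ survive in $G_{i-1}$, and since $G_{i-1}$ is a subgraph of $G$ the cycle acquires no chord; hence $C$ is an induced cycle of $G_{i-1}$. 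Writing $e_i=(x,y)$, let $u$ be the neighbour of $x$ on $C$ other than $y$ and $w$ the neighbour of $y$ on $C$ other than $x$. Because $C$ has length at least six we have $u\neq w$ and $(u,w)$ is not an edge of $C$. But $u\in N(x)$ and $w\in N(y)$ lie in different colour classes, so bisimpliciality of $e_i$ in $G_{i-1}$ forces $(u,w)$ to be an edge, a chord of $C$ in $G_{i-1}$, a contradiction.

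For the ``only if'' direction I induct on $m$, the case $m=0$ being vacuous. The step rests on two facts. \emph{Fact~(A):} if $e=(x,y)$ is a bisimplicial edge of a chordal bipartite graph $G$, then $G-e$ is again chordal bipartite. \emph{Fact~(B):} every chordal bipartite graph with at least one edge has a bisimplicial edge. Granting these, I choose a bisimplicial edge $e_1$ by (B), note by (A) that $G_1=G-e_1$ is chordal bipartite, apply the induction hypothesis to obtain an elimination ordering $(e_2,\dots,e_m)$ of $G_1$, and prepend $e_1$ to get the required ordering of $G$.

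Fact~(A) I would establish directly, by an argument parallel to the easy direction. Suppose $G-e$ had an induced cycle $C$ of length at least six. It is a cycle of $G$ whose only possible chord in $G$ is $e$ itself, so, $G$ being chordal bipartite, $e=(x,y)$ must be a chord of $C$, splitting it into two arcs, each of length at least two. Now every neighbour of $x$ on $C$ lies in $N(x)$ and every neighbour of $y$ on $C$ lies in $N(y)$, so by bisimpliciality all these cross pairs are adjacent. A short case analysis on the two arc lengths (using that their sum is at least six) shows that at least one such adjacency joins two vertices that are non-consecutive on $C$ and whose edge is not $e$; that edge is therefore a chord of $C$ present in $G-e$, contradicting that $C$ is induced there.

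The main obstacle is Fact~(B). I expect the cleanest route is through the bipartite adjacency matrix $M$ of $G$, where $M_{a,b}=1$ exactly when $a\sim b$: a bipartite graph is chordal bipartite precisely when $M$ is totally balanced, and by the theorem on doubly lexical orderings $M$ admits an ordering of its rows and columns that is $\Gamma$-free, $\Gamma$ denoting the forbidden $2\times2$ pattern $\bigl(\begin{smallmatrix}1&1\\1&0\end{smallmatrix}\bigr)$. An edge $(a,b)$ is bisimplicial exactly when the submatrix of $M$ on rows $N(b)$ and columns $N(a)$ is all ones, and $\Gamma$-freeness of a doubly lexically ordered matrix guarantees that a suitable extremal $1$-entry (a corner of the support) has precisely this all-ones property; verifying this corner claim from $\Gamma$-freeness is the technical heart of the whole proof. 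An alternative is a purely graph-theoretic extremal argument, selecting an edge that minimises a neighbourhood parameter and showing that a failure of bisimpliciality produces an induced $P_4$ that extends to a forbidden long induced cycle, but the matrix formulation keeps the case analysis shortest.
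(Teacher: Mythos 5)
The paper gives no proof of Theorem~\ref{char}; it imports the result from \cite{kn:brandstadt,kn:kloks}, so there is no in-text argument to compare against. Your overall architecture (peel off a bisimplicial edge and induct) is the standard one, and two of your three pieces are sound. The ``if'' direction is correct and complete: the first edge of the ordering that lies on a long induced cycle forces, via bisimpliciality in $G_{i-1}$, a chord of that cycle. Fact~(A) is also correct, and in fact needs no case analysis: since $x\in A$ and $y\in B$, both arcs of $C$ determined by the chord $e=(x,y)$ have odd length at least $3$ by bipartiteness, so the neighbour of $x$ on one arc and the neighbour of $y$ on the other arc are never consecutive on $C$, and their forced adjacency is the desired chord of $C$ surviving in $G-e$.

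The genuine gap is Fact~(B), the existence of a bisimplicial edge in every chordal bipartite graph with at least one edge. This is the entire content of the hard direction, and you do not prove it: you reduce it to two further unproven theorems (that a bipartite graph is chordal bipartite if and only if its bipartite adjacency matrix is totally balanced, and that such a matrix admits a $\Gamma$-free ordering), and you explicitly defer the remaining step as ``the technical heart.'' That remaining corner argument is actually the easy part: take the topmost nonzero row $i_0$ and the leftmost $1$ in it, say in column $j_0$; every $i'$ with $M_{i',j_0}=1$ satisfies $i'\geq i_0$ and every $j'$ with $M_{i_0,j'}=1$ satisfies $j'\geq j_0$, so $\Gamma$-freeness forces $M_{i',j'}=1$ and $(i_0,j_0)$ is bisimplicial. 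But the existence of a $\Gamma$-free ordering for totally balanced matrices is a theorem of essentially the same depth as Theorem~\ref{char} itself, so invoking it without proof leaves the induction step unestablished. As written, the proposal establishes only the easy direction together with closure of the class under deletion of a bisimplicial edge.
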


\noindent
If $G$ is chordal bipartite then any bisimplicial edge can start 
a perfect edge without vertex elimination ordering. Thus Theorem~\ref{char}  
provides a greedy 
recognition algorithm for chordal bipartite 
graphs~\cite{kn:brandstadt,kn:kloks,kn:kloks3}.  

\bigskip
We also need the following characterization of chordal bipartite graphs. 

A graph is chordal if it has no induced cycle of length more 
than three~\cite{kn:hajnal}. A famous theorem of Dirac shows that a 
graph is chordal if and only if every minimal separator is a 
clique~\cite{kn:dirac} (see the next section for the definition 
of a minimal separator). 
A vertex is simplicial if its neighborhood 
induces a clique. A graph is chordal if and only if every induced 
subgraph has a simplicial vertex~\cite{kn:dirac}. Thus the graph 
has a `perfect elimination ordering' of the vertices by removing 
simplicial vertices from the graph one by one.     

Let $k \geq 3$. A $k$-sun is a chordal graph with $2k$ vertices, 
partitioned into two ordered sets $C=\{c_1,\ldots,c_k\}$ and 
$S=\{s_1,\ldots,s_k\}$. The graph induced by $C$ is a clique 
and, for $i\in \{1,\ldots,n\}$, 
$s_i$ is adjacent to $c_i$ and $c_{i+1}$ where $c_{n+1}=c_1$. 
A graph is strongly chordal if it is chordal and has no induced sun. 
The structure of strongly chordal graphs was first analyzed by Farber 
at about the same time during which 
the class of chordal bipartite graphs, as 
`totally balanced matrices,'  
was investigated by various other authors. 
It turns out that the two classes of graphs 
have, essentially the same structure. 

Consider a bipartite graph $G=(A,B,E)$. Let $G_A$ be the 
chordal graph obtained 
from $G$ by adding an edge between every pair of vertices in $A$. 

\begin{theorem}[\cite{kn:dahlhaus}] 
\label{dahlhaus}
A bipartite graph $G=(A,B,E)$ is chordal bipartite if and only if 
$G_A$ is strongly chordal. 
\end{theorem}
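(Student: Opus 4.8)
The plan is to reduce the statement to a comparison of forbidden induced subgraphs on the two sides. By the definition given above, $G_A$ is strongly chordal precisely when it is chordal and contains no induced $k$-sun with $k \geq 3$, while $G$ is chordal bipartite precisely when it contains no induced cycle $C_{2k}$ with $k \geq 3$ (a bipartite graph has no odd cycles, so ``length more than four'' means length at least six). Thus it suffices to prove two things: first, that $G_A$ is always chordal, which also justifies calling it ``the chordal graph''; and second, that $G_A$ contains an induced $k$-sun if and only if $G$ contains an induced $C_{2k}$, for every $k \geq 3$.

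For the first step I would argue that no chordless cycle of length at least four survives in $G_A$. Since only edges inside $A$ are added, the class $B$ remains independent in $G_A$ and $A$ becomes a clique. A chordless cycle of length at least four can meet the clique $A$ in at most two vertices, since three such vertices would contain a non-consecutive pair and hence a chord; but with at most two vertices in $A$, either those two $A$-vertices form a chord, or two of the remaining vertices, lying in the independent set $B$, are consecutive on the cycle, and each case is a contradiction. Hence $G_A$ is chordal, and by the definition of strongly chordal the whole problem reduces to the sun-versus-cycle equivalence.

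The easy direction of that equivalence sends a cycle to a sun. Given an induced $C_{2k}$ in $G$, written $a_1 b_1 a_2 b_2 \cdots a_k b_k$ with $a_i \in A$ and $b_i \in B$, I would set $c_i = a_i$ and $s_i = b_i$. In $G_A$ the $c_i$ form a clique, the $s_i$ remain independent, and since the cycle was induced each $b_i$ is adjacent to exactly $a_i$ and $a_{i+1}$ among the $a_j$; these are precisely the adjacencies of a $k$-sun, and no extra edge appears because every edge of $G_A$ incident with a $B$-vertex already belongs to $G$. Thus the $2k$ vertices induce a $k$-sun.

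The harder direction, which I expect to be the main obstacle, recovers a cycle from a sun and hinges on a color-class forcing argument. Suppose $G_A$ contains an induced $k$-sun on a clique $C = \{c_1,\ldots,c_k\}$ and an independent set $S = \{s_1,\ldots,s_k\}$ with $s_i$ adjacent to $c_i$ and $c_{i+1}$. Because two vertices of $B$ are never adjacent in $G_A$, the clique $C$ contains at most one vertex of $B$ and the independent set $S$ contains at most one vertex of $A$. If some $c_j$ lay in $B$, then both its sun-neighbors $s_{j-1}$ and $s_j$, distinct since $k \geq 3$, would have to lie in $A$, contradicting that at most one $s_i$ is in $A$; hence $C \subseteq A$. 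If some $s_j$ lay in $A$, then, $A$ being a clique, $s_j$ would be adjacent in $G_A$ to every $c_i$, in particular to some $c_i$ with $i \neq j, j+1$ (available because $k \geq 3$), contradicting that the sun is induced; hence $S \subseteq B$. With $C \subseteq A$ and $S \subseteq B$, the only edges of $G$ among these $2k$ vertices are $s_i c_i$ and $s_i c_{i+1}$, so they induce the cycle $c_1 s_1 c_2 s_2 \cdots c_k s_k$ of length $2k \geq 6$ in $G$. Combining the two directions with the first step closes the equivalence: $G$ is chordal bipartite iff it has no induced $C_{2k}$ iff $G_A$ has no induced $k$-sun iff $G_A$ is strongly chordal.
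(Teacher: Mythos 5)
The paper does not actually prove this theorem: it is quoted from Dahlhaus with a citation only, and the nearest argument in the paper is the proof of Lemma~\ref{G* is SC}, which establishes just one direction (chordal bipartite implies strongly chordal) for the different construction $G_B^{\ast}$, using simple vertices and Theorem~\ref{simple vertex} rather than forbidden subgraphs. Your argument is therefore a genuinely different and more self-contained route. Since this paper \emph{defines} strongly chordal as chordal plus sun-free, reducing the equivalence to ``induced $C_{2k}$ in $G$ if and only if induced $k$-sun in $G_A$'' is legitimate, and all three of your steps check out: the observation that $G_A$ is always chordal (a chordless cycle of length at least four meets the clique $A$ in at most two vertices, forcing either an $A$--$A$ chord or two consecutive $B$-vertices) is correct; the cycle-to-sun direction is immediate because every edge of $G_A$ incident with $B$ is an edge of $G$; and the color-class forcing in the sun-to-cycle direction ($C \subseteq A$ because a $B$-vertex of $C$ would need two distinct sun-neighbors in $A$, and $S \subseteq B$ because an $A$-vertex of $S$ would be adjacent in $G_A$ to all of $C$) is exactly the right key step. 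The one caveat is that your hard direction relies on the standard reading of the paper's loosely stated sun definition, namely that $S$ is independent and that $s_i$ has no neighbors in $C$ other than $c_i$ and $c_{i+1}$; under that reading the proof is complete. Compared with the simple-vertex approach, your argument delivers both directions of the equivalence at once and needs none of the elimination-ordering machinery, at the price of taking the sun-free characterization as the definition of strong chordality.
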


\noindent
Strongly chordal graphs have a special perfect elimination ordering of its 
vertices. 

\begin{definition}
Let $G$ be a graph. A vertex $x$ is {\em simple\/} if the vertices 
of $N[x]$ can be ordered $x_1,\ldots,x_{\ell}$ such that, 
for $i=1,\ldots,\ell-1$,   
$N[x_i] \subseteq N[x_{i+1}]$. 
\end{definition}

\begin{theorem}
[\cite{kn:anstee2,kn:anstee,kn:brouwer2,kn:farber,kn:kolen,kn:lehel}]%
\label{simple vertex}
A graph is strongly chordal if and only if every induced subgraph 
has a simple vertex. 
\end{theorem}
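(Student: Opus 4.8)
The plan is to prove the two implications separately, recording first a basic observation used throughout: if $x$ is simple then $N[x]$ is a clique, so $x$ is in particular simplicial. Indeed, if the vertices of $N[x]$ are ordered $x_1,\dots,x_\ell$ with $N[x_1]\subseteq\cdots\subseteq N[x_\ell]$, then for $i<j$ we have $x_i\in N[x_i]\subseteq N[x_j]$, so $x_i$ and $x_j$ are adjacent; hence every two vertices of $N[x]$ are adjacent.

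For the direction ``every induced subgraph has a simple vertex $\Rightarrow$ strongly chordal'' I would verify the two defining properties of strong chordality. Since a simple vertex is simplicial, the hypothesis gives that every induced subgraph has a simplicial vertex, so by Dirac's theorem $G$ is chordal. It remains to exclude induced suns, and for this I would show directly that for every $k\ge 3$ a $k$-sun has \emph{no} simple vertex at all. A tooth $s_i$ has closed neighbourhood $\{s_i,c_i,c_{i+1}\}$, and $N[c_i]$ and $N[c_{i+1}]$ are incomparable because $s_{i-1}\in N[c_i]\setminus N[c_{i+1}]$ and $s_{i+1}\in N[c_{i+1}]\setminus N[c_i]$; here $k\ge 3$ is used so that these teeth are distinct and attach to different clique vertices. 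Symmetrically, a clique vertex $c_i$ has inside $N[c_i]$ the two incomparable closed neighbourhoods $N[s_{i-1}]$ and $N[s_i]$. Thus a $k$-sun has no simple vertex, so if $G$ contained an induced sun that induced subgraph would already violate the hypothesis. Hence $G$ is chordal and sun-free, that is, strongly chordal.

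For the converse I would first reduce to a single statement. Both chordality and the absence of an induced sun are hereditary, so every induced subgraph of a strongly chordal graph is again strongly chordal; it therefore suffices to prove that every strongly chordal graph with at least one vertex has a simple vertex. Since $G$ is chordal it has a simplicial vertex, and I would make an extremal choice: pick a simplicial vertex $x$ whose closed neighbourhood $N[x]$ is inclusion-minimal among the closed neighbourhoods of the simplicial vertices of $G$. If the closed neighbourhoods of the vertices of $N[x]$ form a chain, then $x$ is simple and we are done; so assume they do not. Then there are $u,v\in N[x]$ and vertices $a\sim u$, $a\not\sim v$ and $b\sim v$, $b\not\sim u$. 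Since $N[x]$ is a clique, $a$ and $b$ lie outside $N[x]$ and are distinct, and chordality (no induced $C_4$) forces $a\not\sim b$, so $a\text{-}u\text{-}v\text{-}b$ is an induced $P_4$ whose two inner vertices are both adjacent to $x$.

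The hard part will be to turn such a configuration into an induced sun, contradicting strong chordality. A single incomparable pair only produces the $P_4$ above, which is not yet a sun, so the crux is to iterate: using the minimality of $N[x]$ together with repeated applications of chordality, I would track the ``crossing'' neighbours around $x$ and extend the partial clique-plus-teeth pattern, at each step invoking the absence of induced $C_4$'s and longer chordless cycles to force the required adjacencies and non-adjacencies, until a full $k$-sun for some $k\ge 3$ is exhibited on vertices of $N[x]$ together with the attached crossing vertices. This propagation step, and the bookkeeping needed to check that the vertices produced are pairwise distinct and induce exactly a sun, is the technical heart of the argument; once it is in place, the contradiction with sun-freeness shows that the extremally chosen $x$ must in fact be simple, completing the proof that every strongly chordal graph has a simple vertex and hence, by heredity, the theorem.
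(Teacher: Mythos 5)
The paper gives no proof of Theorem~\ref{simple vertex} at all: it is quoted from the literature (Farber, Anstee, Brouwer--Kolen, Lehel), so there is nothing in the text to compare your argument against, and it must be judged on its own. Your first direction is complete and correct: simple implies simplicial (so Dirac gives chordality), and your verification that no vertex of a $k$-sun, $k\geq 3$, is simple --- $N[c_i]$ and $N[c_{i+1}]$ are incomparable inside $N[s_i]$, and $N[s_{i-1}]$ and $N[s_i]$ are incomparable inside $N[c_i]$ --- correctly rules out induced suns. The reduction of the converse to ``every strongly chordal graph has a simple vertex'' via heredity is also fine, as is the extraction of the induced $P_4$ $a$-$u$-$v$-$b$ from a simplicial but non-simple vertex $x$.

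The gap is that the converse direction is never actually proved. Everything up to the induced $P_4$ is routine and does not yet use sun-freeness; the entire mathematical content of this direction of the theorem is precisely the step you defer --- turning a non-simple (extremally chosen) simplicial vertex into an induced sun --- and you state explicitly that this ``propagation step'' and its bookkeeping are omitted. This is not a detail one can wave at: it is Farber's theorem (equivalently, the Brouwer--Kolen nest-point theorem for totally balanced hypergraphs), whose known proofs occupy several pages and proceed either by an intricate minimal-counterexample construction of a trampoline or via doubly lexical orderings of the neighborhood matrix. Moreover, you give no evidence that your particular extremal choice (a simplicial vertex with inclusion-minimal closed neighborhood) suffices to drive the construction; a single incomparable pair in $N[x]$ yields only the $P_4$, and the claim that chordality plus minimality forces a closed cyclic pattern of crossing neighbours is exactly what must be established. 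As it stands the hard implication is a proof plan, not a proof.
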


\noindent
Let $G=(A,B,E)$ be chordal bipartite. 
It is easy to see that if $B \neq \es$, then $B$ has a vertex that is 
simple in $G_A$ (see, {\em e.g.\/},~\cite{kn:hammer,kn:huang}). 

The ordering of the vertices of a strongly 
chordal graph, obtained by repeatedly removing a simple 
vertex is called a `simple elimination ordering.' 
     
\bigskip
More information on the structure of chordal bipartite graphs  
can be found 
in the 
books~\cite{kn:berge,kn:brandstadt,kn:golumbic2,kn:mckee,kn:spinrad} 
and in various papers, {\em e.g.\/},~\cite{kn:anstee}. 
We refer to the literature  
for many different kinds of characterizations. 
Chordal bipartite graphs 
can be recognized in $O(n^2)$ time~\cite{kn:kloks3,kn:spinrad2}. 
Several NP-complete problems 
can be solved in polynomial time on chordal bipartite graphs.  
Others, such as the domination problem, remain 
NP-complete~\cite{kn:damaschke,kn:muller}.      

\begin{definition}
Let $G=(V,E)$ be a graph. 
A set $F \subset V$ is a {\em feedback vertex set\/} 
of $G$ if $G-F$ has no induced cycle. 
\end{definition}

\noindent
The feedback vertex set problem asks for a feedback vertex set 
of minimal cardinality.   
In this paper we show that the feedback vertex set problem can be 
solved in polynomial time on chordal bipartite graphs. 

\section{Separators in chordal bipartite graphs}
%%%%%%%%%%%%%%%%%%%%%%%%%%%%%%%%%%%%%%%%%%%%%%%%

In this section we analyze the structure of chordal bipartite 
by means of minimal separators. 

\begin{definition}
Let $G=(V,E)$ be a graph. A set $S \subset V$ is a {\em separator\/} 
of $G$ if $G-S$ has at least two components. 
\end{definition}

\noindent
If every vertex of a separator $S$ has a neighbor in a component 
$C$ of $G-S$ then we say that $C$ is close to $S$.  
A separator $S$ is {\em minimal\/} if $G-S$ has two components  
that are close to $S$. 

\begin{remark} 
A similar concept was introduced 
in~\cite{kn:golumbic}. In this paper the authors 
prove that a graph is chordal bipartite if and only if every `minimal 
edge separator' is complete bipartite.  Here, a minimal edge separator is a 
minimal separator which separates two edges with nonadjacent 
endpoints into distinct components.   
\end{remark}

The following lemma generalizes the result of~\cite{kn:golumbic}. 
  
\begin{lemma}
\label{min sep is CB}
Let $G=(A,B,E)$ be chordal bipartite. Let $S$ be a minimal separator of 
$G$. Then $G[S]$ is complete bipartite. 
\end{lemma}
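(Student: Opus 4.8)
The plan is to argue by contradiction. Suppose some $a \in S \cap A$ and $b \in S \cap B$ are \emph{not} adjacent; I will build an induced cycle of length at least six, contradicting the fact that $G$, being chordal bipartite, has no induced cycle of length more than four. The main structural input is the minimality of $S$: by definition there are two components $C_1$ and $C_2$ of $G - S$ that are close to $S$, so every vertex of $S$ — in particular both $a$ and $b$ — has a neighbor in $C_1$ and a neighbor in $C_2$.

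First I would construct two paths joining $a$ and $b$, one routed through each close component. Since $a$ and $b$ each have a neighbor in $C_1$ and $C_1$ is connected, there is a path from $a$ to $b$ whose internal vertices all lie in $C_1$; let $P_1$ be a shortest such path. Analogously let $P_2$ be a shortest path from $a$ to $b$ with all internal vertices in $C_2$. Being shortest within these constrained families, each $P_i$ is an induced path: any chord $u_iu_j$ with $j>i+1$ would yield a strictly shorter path whose internal vertices are still confined to $C_i$. In particular $a$ and $b$ each have a \emph{unique} neighbor on $P_i$, namely their immediate successor and predecessor on the path.

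Next I would verify that $P_1 \cup P_2$ is an induced cycle. The only vertices shared by the two paths are $a$ and $b$, so the union is a single cycle. To see it is chordless I rule out every possible chord: there is none inside $P_1$ or inside $P_2$ since each is induced; there is no edge between an internal vertex of $P_1$ and an internal vertex of $P_2$ since these lie in the \emph{distinct} components $C_1$ and $C_2$ of $G-S$; and there is no edge from $a$ or from $b$ to a non-consecutive vertex of either path, since $a$ and $b$ have only one neighbor on each path. Because $a \in A$ and $b \in B$ lie in different color classes, every $a$–$b$ path has odd length, so the assumption that $a$ and $b$ are nonadjacent forces each $P_i$ to have length at least three. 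Hence the cycle has length at least six, giving an induced cycle of length more than four — a contradiction. Therefore $a$ and $b$ are adjacent, and since this holds for every such pair, $G[S]$ is complete bipartite.

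I expect the delicate point to be the chord analysis: the whole argument hinges on choosing the two paths so that the endpoints $a$ and $b$ introduce no shortcut chord into the interior of the opposite path. This is precisely why the shortest-path (and therefore induced) choice is used, in tandem with the fact that $C_1$ and $C_2$ are separated by $S$; without both ingredients the union $P_1 \cup P_2$ could acquire chords and fail to be induced.
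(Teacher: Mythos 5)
Your proposal is correct and follows essentially the same route as the paper: both argue by contradiction, route one path through each of the two close components between the nonadjacent pair, and observe that the separation of the components plus the induced/minimal choice of paths yields a chordless cycle of length at least six. Your use of shortest constrained paths is just a cleaner packaging of the paper's choice of $p_i,q_i$ so that $x$ and $y$ have no other neighbors on $P_i$.
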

\begin{proof}
Assume that $S$ has two nonadjacent vertices $x \in A$ and $y \in B$. 
Since $S$ is minimal there are two components $C_1$ and $C_2$ in $G-S$ 
that are close to $S$. For $i \in \{1,2\}$ let $p_i$ be a neighbor 
of $x$ in $C_i$ and let $q_i$ be a neighbor of $y$ in $C_i$. 
Then $p_i \in B$ and $q_i \in A$ since $G$ is bipartite. Since $G[C_i]$ 
is connected there exist a path $P_i$ in $G[C_i]$ with terminals $p_i$ and 
$q_i$. We may choose $p_i$ and $q_i$ such that $x$ and $y$ have no other 
neighbors than $p_i$ and $q_i$ in $P_i$. Since $C_1$ and $C_2$ are components 
of $G-S$ no vertex of $P_1$ is adjacent to any vertex of $P_2$. It follows 
that $P_1+P_2+\{x,y\}$ induces a cycle of length at least 6 which is a 
contradiction. 

This proves the lemma.
\qed\end{proof}

\begin{lemma}
\label{simplicial} 
Let $G=(A,B,E)$ be chordal bipartite. Let $S$ be a minimal 
separator of $G$ and let $C$ be a component of $G-S$ that is 
close to $S$. If $S \cap A \neq \es$ then there exists a 
vertex $x$ in $C$ with 
\[N(x) \cap S = S \cap A.\]
\end{lemma}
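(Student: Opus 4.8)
The plan is to translate the conclusion into a purely neighborhood statement. A vertex $x$ with $N(x)\cap S = S\cap A$ must lie in $B$, since it has a neighbor in $A$ but none in $S\cap B$; and conversely every $x\in C\cap B$ satisfies $N(x)\cap S\subseteq A\cap S=S\cap A$ automatically. Writing $S_A=S\cap A$, the lemma is therefore equivalent to the existence of a single vertex $x\in C\cap B$ adjacent to \emph{every} vertex of $S_A$, i.e.\ to the statement that the sets $\{\,N(a)\cap C : a\in S_A\,\}$ have a common element. Each of these sets is nonempty because $C$ is close to $S$.

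First I would record the two ingredients that let me manufacture long cycles. Since $S$ is minimal, $G-S$ has a second component $C'\neq C$ that is close to $S$, and by Lemma~\ref{min sep is CB} the graph $G[S]$ is complete bipartite. Using these I can dispose of the pairwise version of the claim: if two vertices $a,a'\in S_A$ had no common neighbor in $C$, then picking neighbors of $a$ and $a'$ in $C$, joining them by a shortest path inside the connected graph $G[C]$, and closing this path up through $C'$ (whose vertices send no edge into $C$) produces an induced cycle of length at least six, contradicting Theorem~\ref{char}. The same device, shortened through a vertex of $S\cap B$ when that set is nonempty, handles the degenerate cases.

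To promote pairwise intersection to a global common neighbor I would lean on the strong chordality of $G_A$. Restricting attention to the induced, connected, chordal bipartite subgraph $H=G[C+S_A]$, the graph obtained from $H$ by making its $A$-class into a clique is strongly chordal by Theorem~\ref{dahlhaus}, and by the remark following Theorem~\ref{simple vertex} the color class $B\cap C$ contains a vertex that is simple there. The nesting of closed neighborhoods forced by simplicity is meant to let me linearly order the traces $N(v)\cap S_A$ for $v\in B\cap C$, after which an extremal choice, the vertex whose trace is inclusion-maximal, is forced to capture all of $S_A$: otherwise an uncaptured $a^\ast\in S_A$ together with a captured one reproduces, via the cycle construction above, a forbidden induced cycle.

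I expect the genuine difficulty to lie precisely in this last promotion step. The pairwise argument is cheap, but jumping to a common neighbor for all of $S_A$ simultaneously is a Helly-type assertion: three vertices of $S_A$ with pairwise but no common neighbor in $C$ already span an induced $C_6$, and it is the systematic elimination of every such obstruction, carried by the $\Gamma$-free (doubly lexical) structure underlying strong chordality rather than by any single cycle, that forms the technical core. Verifying that the simple-vertex ordering really does make the traces nested, and that the inclusion-maximal vertex cannot fail to dominate $S_A$, is the step I would budget the most care for.
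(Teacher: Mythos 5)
Your reduction of the lemma to a Helly statement is a reasonable way to read it, and your pairwise step is sound: it is essentially the same long-cycle construction the paper uses for Lemma~\ref{min sep is CB} (take an induced $a$--$a'$ path with interior in $C$, which has at least two interior vertices precisely when $a$ and $a'$ have no common neighbour in $C$, close it by an induced path with interior in $C'$, and the resulting chordless cycle has length at least six, contradicting chordal bipartiteness). The gap is exactly where you predicted it, and the mechanism you propose for closing it does not work. The traces $N(v)\cap S_A$ for $v\in C\cap B$ are \emph{not} linearly ordered by inclusion: already $S_A=\{a_1,a_2\}$ with some $b_1\in C$ seeing only $a_1$ and some $b_2\in C$ seeing only $a_2$ gives incomparable traces, and this is perfectly compatible with chordal bipartiteness. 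What a simple vertex $v\in C\cap B$ of the clique-completed graph gives you is that the neighbourhoods of the \emph{neighbours of $v$} are nested; it says nothing about how the traces of the other $B$-vertices of $C$ compare, and nothing forces an inclusion-maximal trace to equal $S_A$. Your fallback --- that an uncaptured $a^{\ast}$ together with a captured $a$ reproduces the forbidden cycle --- also fails: the pairwise step already guarantees that $a^{\ast}$ and $a$ \emph{do} have a common neighbour in $C$; it just need not be your extremal vertex, so no long induced cycle arises from that pair.

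What your route actually needs is the full Helly property of the family $\{N(a)\cap C: a\in S_A\}$. This is true --- these are the hyperedges of the induced chordal bipartite graph $G[S_A+(C\cap B)]$, whose bipartite adjacency matrix is totally balanced, and balanced hypergraphs are Helly --- but it is a genuine theorem that you have not proved and that the paper nowhere provides. The three-set case can be done by hand (pairwise intersecting with empty triple intersection forces an induced $C_6$ on $a_1,b_{12},a_2,b_{23},a_3,b_{13}$, since each $b_{ij}$ may be chosen outside $N(a_k)$), but pairwise plus triple-wise does not bootstrap to $k$-wise without a further argument, and that argument is the real content left open. The paper avoids all of this: its proof of Lemma~\ref{simplicial} is an induction on the edges of $G[S+C+C']$ driven by a bisimplicial edge $(p,q)$ supplied by Theorem~\ref{char}, with a case analysis on where $p$ and $q$ lie; either the bisimplicial edge exhibits the desired vertex directly, or it can be deleted while preserving a minimal-separator configuration in a smaller graph. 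To repair your proof, either import and verify the Berge-type result that totally balanced hypergraphs have the Helly property, or replace the ``linearly ordered traces'' step by an honest induction.
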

\begin{proof}
Let $C^{\prime} \neq C$ be another component of $G-S$ that is close 
to $S$. Consider the subgraph induced by $S+C_1+C_2$ For 
simplicity, denote this subgraph also by $G$. Assume that 
$S \cap A \neq \es$. Then $E \neq \es$ otherwise $S$ is not a 
minimal separator. 

If 
$C$ contains only one vertex $x$ then $x$ is adjacent to 
all of $S$. Then $S \cap B =\es$ and $N(x) \cap S = S \cap A$ 
because $G$ is bipartite and $C$ is close to $S$. Assume that $C$ 
contains at least two vertices. 

Since $G$ is chordal bipartite it has a 
bisimplicial edge $e=(p,q)$. Assume that $p \in A$ and $q \in B$. 
We consider the following cases. 

Assume that $p$ and $q$ 
are both in $S$. Because $C$ and $C^{\prime}$ are close to $S$ 
$p$ has a neighbor in $C$ and $q$ has a neighbor in $C^{\prime}$. 
This is a contradiction since no vertex of $C$ is adjacent to 
any vertex of $C^{\prime}$ and thus $e$ is not bisimplicial. 

Assume that $p \in S$ and $q \in C$. Since $C$ has at least two vertices 
$q$ has a neighbor in $C$  and since $C^{\prime}$ is close to $S$ 
$p$ has a neighbor in $C^{\prime}$. This contradicts the assumption 
that $e$ is bisimplicial. 

Assume that $p \in S$ and $q \in C^{\prime}$. 
By the previous argument $C^{\prime}=\{q\}$. Also $S \cap B =\es$ 
since every vertex of $S$ has a neighbor in $C^{\prime}$. Let $x$ 
be any neighbor of $p$ in $C$. Since $(p,q)$ is bisimplicial, and 
$N(q)=S$, $x$ is adjacent to all vertices of $S$ as well. 

Assume 
that $p$ and $q$ are both in $C^{\prime}$. Let $G^{\prime}$ be the 
graph obtained from $G$ by removing the edge $e$ but not its endpoints. 
Then $G^{\prime}$ is chordal bipartite. If the component $C^{\prime}$ 
remains connected after removal of $e$ we can use induction since 
$S$ is a minimal separator of $G^{\prime}$, $C$ is a component of 
$G^{\prime}-S$ that is close to $S$ and $G^{\prime}$ has fewer 
edges than $G$. 

Assume that the removal of $e$ disconnects $C^{\prime}$. 
Then one of $p$ and $q$ has only neighbors in $S$. 
First assume that $p$ 
has only neighbors in $S$.  
Let $D$ be the component of $G^{\prime}-S$ that contains $q$. 
If every vertex of $N(p) \cap S$ has a neighbor 
in $D$ then $D$ is close to $S$ in $G^{\prime}$ 
and we can apply induction as above. 
Assume that some vertex $s \in S$ has $N(s) \cap C^{\prime} =\{p\}$. If 
$q$ has a neighbor $q^{\prime}$ in $D$ then $s$ is adjacent to $q^{\prime}$ 
which is a contradiction. 
Thus   
$C^{\prime}=\{p,q\}$, $N(p) \cap S =S \cap B$, and 
$N(q) \cap S = S \cap A$. 
Now $S \cap A$ is a minimal separator of $G^{\prime}$ with 
close components $C + (S \cap B) + \{p\}$ and $\{q\}$. 
Let 
\[G^{\prime\prime}=G^{\prime}-((S\cap B) \cup \{p\}).\]
Then $S \cap A$ is a minimal separator in $G^{\prime\prime}$ with 
close components $C$ and $\{q\}$. By induction $C$ has a vertex 
adjacent to all vertices of $S \cap A$. 

Now assume that $q$ has only neighbors in $S \cap A$ in $G^{\prime}$. 
Let $D$ be the component of $G^{\prime}-S$ that contains $p$. Assume 
there exists a vertex $s \in S$ with $N(s) \cap C^{\prime} = \{q\}$. 
If $p$ has a neighbor $p^{\prime} \in D$ then $s$ is 
adjacent to $p^{\prime}$ 
which is a contradiction. Thus in this case $C^{\prime} = \{p,q\}$ 
and we obtain the result as above. Otherwise, every vertex of 
$N(q) \cap S$ has a neighbor in $D$. Then $S$ is a minimal separator 
in $G^{\prime}$ and we can use induction.  
  
Assume that $p$ and $q$ are both in $C$. If the removal of the edge 
$e=(p,q)$ does not disconnect $C$ then 
$S$ is a minimal separator in the graph $G^{\prime}$, 
obtained by removing 
$e$, and $C^{\prime}$ and $C$ are close to $S$ in $G^{\prime}$. 
Otherwise, it follows as in the case where $p$ and $q$ are both in 
$C^{\prime}$ that 
either we can apply induction on a component 
$D \subset C$ of $G^{\prime}-S$ or,     
\[C=\{p,q\} \quad\text{and}\quad N(p) \cap S=S \cap B
\quad\text{and}\quad N(q) \cap S = S \cap A.\]

This proves the lemma. 
\qed\end{proof}

\begin{lemma}
\label{C has bisimplicial}
Let $G=(A,B,E)$ be chordal bipartite. Let $S$ be a minimal separator 
and let $C$ be a component of $G-S$ that is close to $S$. 
If $C$ has an edge then it has an edge which is bisimplicial in $G$. 
\end{lemma}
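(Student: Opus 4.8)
The plan is to prove the statement by induction on the number of edges of $G$, closely paralleling the proof of Lemma~\ref{simplicial}. Since $S$ is a minimal separator there is, besides $C$, a second component $C'$ close to $S$; I would first restrict attention to $H=G[S+C+C']$. This restriction is harmless: for an edge $(u,v)$ with both endpoints in $C$ we have $N_G(u),N_G(v)\subseteq C+S$, so the neighbourhoods, and hence the bisimpliciality of such an edge, are the same in $G$ and in $H$. Thus it suffices to find inside $C$ an edge that is bisimplicial in $H$, and I rename $H$ back to $G$.

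Since $G$ is chordal bipartite it has a bisimplicial edge $e=(p,q)$ with $p\in A$ and $q\in B$ (Theorem~\ref{char}). The first key step is to locate $e$ relative to $C$, and I claim that if $e$ has an endpoint in $C$ then both endpoints lie in $C$. Indeed, suppose $p\in S\cap A$ and $q\in C\cap B$. Because $C'$ is close to $S$, $p$ has a neighbour $p'\in C'\cap B$, so $p'\in N(p)$. If $q$ had a neighbour $a\in C\cap A$, bisimpliciality would force $a$ adjacent to $p'$, which is impossible since $a$ and $p'$ lie in different components of $G-S$; hence $q$ has no neighbour in $C$, forcing $C=\{q\}$ and contradicting that $C$ has an edge. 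The case $p\in C$, $q\in S$ is symmetric, and the case $p,q\in S$ is excluded exactly as in Lemma~\ref{simplicial}. Consequently either both endpoints of $e$ lie in $C$, in which case $e$ is the desired bisimplicial edge, or $e$ has no endpoint in $C$ at all.

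In the remaining situation I would delete $e$ to obtain the chordal bipartite graph $G'$ and apply induction. What makes this work is a transfer argument: if $f=(u,v)$ with $u,v\in C$ is bisimplicial in $G'$, then it is already bisimplicial in $G$. Since $e$ is incident to no vertex of $C$, the neighbourhoods $N(u),N(v)$ are identical in $G$ and in $G'$, so bisimpliciality could fail to transfer only if both endpoints of $e$ lay in $W=N(u)+N(v)$; but $p\in N(v)$ and $q\in N(u)$ with $u,v\in C$ would force $p,q\in S$, which we have just ruled out. Hence $e\not\subseteq W$, so $G[W]=G'[W]$ and bisimpliciality is preserved. Moreover $C$ is still a component of $G'-S$ that is close to $S$ and still carries an edge, so the hypotheses for the inductive call are in place provided $S$ is still a minimal separator of $G'$.

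The main obstacle is precisely this last proviso: deleting $e$ can destroy the minimality of $S$. As $e$ meets at most one component other than $C$, this can only happen when $C'$ is the unique second close component and $e$ either disconnects $C'$ or removes the last edge joining some $s\in S$ to $C'$, so that no component of $G'-S$ other than $C$ remains close to $S$. This is exactly the degeneration analysed in Lemma~\ref{simplicial}, and I would resolve it in the same way: when $C'$ collapses one shows that $C'$ reduces to a single edge $\{p,q\}$ with $N(p)\cap S=S\cap B$ and $N(q)\cap S=S\cap A$, whereupon $S\cap A$ becomes a minimal separator of the graph obtained from $G'$ by deleting $(S\cap B)+\{p\}$, still having $C$ as a close component carrying an edge. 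Induction then applies to this smaller instance, and the edge it returns, lying inside $C$, is bisimplicial in $G$ by the transfer argument. Checking these collapse configurations and verifying that the restricted separator keeps $C$ close is the delicate, bookkeeping-heavy part of the proof.
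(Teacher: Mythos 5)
Your reduction to $G[S+C+C']$, the elimination of the cases where the bisimplicial edge $(p,q)$ of $G$ meets $S$ and $C$, and the edge-deletion induction with its transfer argument all match the paper's proof and are sound as far as they go. The trouble is that the two ``remaining'' configurations are exactly where the real work lies, and your outline does not get through either of them.

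First, when $p\in S$ and $q\in C'$ your own argument (correctly) forces $C'=\{q\}$ with $N(q)=S$ and $S\cap B=\es$; but then deleting $(p,q)$ \emph{always} destroys the minimality of $S$, since $p$ retains no neighbour outside $C$, so the inductive call on $G'$ is never available here, and your collapse resolution (which presumes $C'$ shrinks to an edge $\{p,q\}$ lying inside $C'$) does not apply because $p\in S$. The paper needs a genuinely different device at this point: it sets $\Omega=N(p)\cap C$, observes that every vertex of $\Omega$ is adjacent to all of $S$, splits $G[C]-\Omega$ into components, and either recurses on a smaller minimal separator (an induction on $|C|$, not on the number of edges) or extracts a bisimplicial edge of $G[(C\cap A)+\Omega]$ directly. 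Second, in the collapse case $C'=\{p,q\}$ you propose to recurse on $G''=G'-((S\cap B)+\{p\})$ and then invoke ``the transfer argument,'' but that argument was established only for edge deletion; here vertices of $S\cap B$ are removed, and an edge $(u,v)$ inside $C$ that is bisimplicial in $G''$ need not be bisimplicial in $G$: a neighbour $s\in S\cap B$ of $u$ is guaranteed adjacent to all of $S\cap A$ (Lemma~\ref{min sep is CB}) but not to the neighbours of $v$ inside $C\cap A$. This transfer does work in Lemma~\ref{simplicial}, whose conclusion is a neighbourhood-containment property preserved under such deletions, but it fails for bisimpliciality; accordingly the paper instead takes a fresh bisimplicial edge $(a,b)$ of $G'$ and, in the subcase $a=p$, $b\in S$, runs a second decomposition via $\Gamma=N(b)\cap C$ before it can recurse. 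Both of these missing arguments are essential, so the proof as outlined has genuine gaps.
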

\begin{proof}
First notice that the claim holds true when $C$ has 
only two vertices; in that case, by Lemma~\ref{min sep is CB}, 
$S+C$ induces a complete bipartite graph 
and the edge in $C$ is bisimplicial. 
If $S=\es$ then $C$ is a 
component of $G$ and the claim follows since $G[C]$ has a bisimplicial 
edge and this edge is also bisimplicial in $G$.  

Let $C^{\prime}$ be another component of $G-S$ that is close to $S$. 
Consider the subgraph induced by $S+C+C^{\prime}$. It suffices to 
prove the claim for this induced subgraph. For simplicity we call 
this induced subgraph 
also $G$. Let $(p,q)$ be a bisimplicial edge in $G$ with 
$p \in A$ and $q \in B$. We consider the following cases. 

Assume that $p \in S$ and that $q \in C$. The vertex $q$ has a neighbor $x$  
in $C$ since $|C| > 1$ and $G[C]$ 
is connected. Since $S$ is minimal, $p$ has a neighbor 
$y \in C^{\prime}$. This is a contradiction since $x$ and $y$ are 
not adjacent. 

Assume that $p \in S$ and that $q \in S$. The vertex $p$ has a neighbor 
$x \in C$ and the vertex $q$ has a neighbor $y \in C^{\prime}$ since 
$C$ and $C^{\prime}$ are close to $S$. This is a contradiction 
since $x$ and $y$ are not adjacent. 

Assume that $p \in S$ and that $q \in C^{\prime}$. If $q$ has 
a neighbor $y \in C^{\prime}$ then we derive a contradiction as above. 
Thus $C^{\prime}=\{q\}$ and $N(q)=S$ and $S \cap B = \es$. Let 
$\Omega = N(p) \cap C$. Then every vertex of $\Omega$ is adjacent to 
every vertex of $S$ since $(p,q)$ is bisimplicial. 
Let $C_1,\ldots,C_t$ be the components 
of $G[C]-\Omega$. Assume that a component, say $C_1$ has at least 
two vertices. Let $S^{\prime} \subseteq S+\Omega$ be the set of 
vertices with a neighbor in $C_1$. We claim that $S^{\prime}$ is a 
minimal separator. First notice that $C_1$ is a component of 
$G-S^{\prime}$ and that every vertex of $S^{\prime}$ has a neighbor in 
$C_1$ by construction. Also, $p$ and $q$ are not 
in $S^{\prime}$ since they have no neighbors in $C_1$. 
Let $C^{\prime\prime}$ be the component 
of $G-S^{\prime}$ that contains the edge $(p,q)$. Then every vertex of 
$S^{\prime}$ has a neighbor in $C^{\prime\prime}$ since it is 
adjacent to $p$ or to $q$. We can now use induction on the number 
of vertices in the component $C$ and conclude that 
$C_1$ has an edge which is bisimplicial in $G$. 

Assume that every component of $G[C]-\Omega$ has only one vertex. 
Notice that $C-\Omega$ has only vertices in $A$ since $G[C]$ is connected 
and $\Omega \subseteq N(p) \subseteq B$ since $G$ is bipartite 
and $p \in A$. 
Consider the vertices of $C \cap A$ and $\Omega$.   
Note that $C \cap A \neq \es$ since $|C| > 1$ and $G[C]$ is connected 
and bipartite. 
The graph induced by $(C \cap A) + \Omega$ 
has a bisimplicial edge. This edge is also bisimplicial in $G$ since 
every vertex of $\Omega$ is adjacent to every vertex of $S$. 

Assume that $p \in C^{\prime}$ and that $q \in C^{\prime}$. 
If the removal of the edge $(p,q)$ from the graph 
leaves $C^{\prime}$ connected then the 
claim follows by induction on the number of edges in $G$. 
Otherwise, after removal of the edge $(p,q)$, one of $p$ and 
$q$ has only neighbors in $S$. 
Say $p$ has only neighbors in $S$. 
Let $G^{\prime}$ be the graph obtained from $G$ by removing the edge $(p,q)$ 
and let $D$ be the component of $G^{\prime}-S$ that contains $q$. 
If every vertex of $S$ has a neighbor in $D$ then $S$ is a minimal 
separator in $G^{\prime}$ with close components $C$ and $D$. In 
that case we can proceed by 
induction on the number of edges in $G$. 
Assume some vertex $s \in S$ has no neighbors in $D$ Then $s$ is 
adjacent to $p$ since $C^{\prime}$ is close to $S$. 
If $q$ has a neighbor in $D$ then we arrive at a contradiction 
since $s$ is adjacent to this neighbor. 
We may now conclude that $C^{\prime}=\{p,q\}$.    

The graph $G^{\prime}$ is chordal bipartite. Thus it has a 
bisimplicial edge $(a,b)$ with $a \in A$ and $b \in B$. Assume that 
$a=p$. Then $b \in S$. Let $\Gamma \subseteq C$ be the set of 
neighbors of $b$ in $C$. Then every vertex of $\Gamma$ is 
adjacent to every vertex of $S \cap B$ since every vertex of 
$S \cap B$ is adjacent to $a$ and $(a,b)$ is bisimplicial. 
Consider the components $O_1,\ldots,O_{\ell}$ of $G[C]-\Gamma$. 
Assume that $|O_1| > 1$. Let $S^{\prime} \subseteq S+\Gamma$ be the 
subset that has neighbors in $O_1$. Then $S^{\prime}$ is a minimal 
separator in $G$. Since 
$|O_1| < |C|$ we can use induction 
and conclude that $G[O_1]$ has an edge which is bisimplicial 
in $G$. Assume that all components $O_i$ have only one vertex. 
Then $C-\Gamma$ has only vertices in $B$ since $C$ is connected and 
$\Gamma \subseteq A$. Also, $C-\Gamma \neq \es$ since $C$ is 
connected and $|C| > 1$.   
Consider the subgraph $G^{\prime\prime}$ 
of $G$ induced by $C+(S \cap A) + q$. Notice that $G^{\prime\prime}$ 
is connected and that $S \cap A$ is a minimal separator 
in $G^{\prime\prime}$ with close components $C$ and $\{q\}$. 
By induction on the number of vertices 
in $G$ we may conclude that $C$ has an edge which is 
bisimplicial in $G^{\prime\prime}$. This edge is also bisimplicial 
in $G$ which follows from the fact that  
every vertex of $\Gamma$ is adjacent to every vertex 
of $S \cap B$.  
 
Assume that $a \neq p$ and that $b \neq q$. Assume that $a \in S$ and 
$b \in S$. Then $a$ has a neighbor in $C$ and $b$ is adjacent to 
$p$, which is a contradiction. Assume that $a \in S$ and that 
$b \in C$. Since $|C| > 1$ and $C$ is connected, 
$b$ has a neighbor $x \in C$. Then we obtain a contradiction since 
$a$ is adjacent to $q$ and $(x,q) \not\in E$. 

We conclude that $a \in C$ and that $b \in C$. Then $(a,b)$ is 
a bisimplicial edge in the graph $G$ since $a$ and $b$ have only 
neighbors in $C+S$. 
   
This proves the lemma.  
\qed\end{proof}

\begin{corollary}
Let $G=(A,B,E)$ be chordal bipartite and let $S$ be a 
minimal separator of $G$. Let $C$ be a component of $G-S$ 
that is close to $S$. Assume that $S \cap A \neq \es$ and 
that $S \cap B \neq \es$. Then there exist adjacent vertices 
$x$ and $y$ in $C$ such that 
\[N(x) \cap S = S \cap A \quad\text{and}\quad N(y) \cap S = S \cap B.\] 
\end{corollary}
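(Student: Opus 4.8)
The plan is to first reduce the statement to a single adjacency condition, and then to establish that condition by an induction that parallels the proof of Lemma~\ref{C has bisimplicial}. Write $S_A=S\cap A$ and $S_B=S\cap B$; both are nonempty by hypothesis. First I would pass to the induced subgraph on $S\cup C\cup C'$, where $C'$ is a second component of $G-S$ that is close to $S$ (which exists since $S$ is minimal); this subgraph is again chordal bipartite and it suffices to prove the claim there. Note that $|C|\ge 2$: a close component with a single vertex $v$ would put all of $S$ into $N(v)$ and hence into one color class, contradicting $S_A\ne\varnothing\ne S_B$; since $G[C]$ is connected, $C$ therefore has an edge. Applying Lemma~\ref{simplicial} and its mirror image (swapping the roles of $A$ and $B$) already produces a vertex $x\in C\cap B$ with $N(x)\cap S=S_A$ and a vertex $y\in C\cap A$ with $N(y)\cap S=S_B$; the entire content of the corollary is that $x$ and $y$ can be chosen \emph{adjacent}. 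Equivalently, writing $\Omega=N(x)\cap C\subseteq A\cap C$, I must exhibit a common neighbor of $\{x\}\cup S_B$ inside $C$, that is, a vertex $y\in\Omega$ with $S_B\subseteq N(y)$.

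For the adjacency I would induct on $|C|$, with a secondary induction on the number of edges, imitating Lemma~\ref{C has bisimplicial}. The base case $|C|=2$ is immediate: by Lemma~\ref{min sep is CB} the set $S\cup C$ induces a complete bipartite graph, so the two vertices of $C$ are adjacent and each is adjacent to the whole of the opposite color class of $S$. For the inductive step I would take a bisimplicial edge $(p,q)$ of $G$ (Theorem~\ref{char}) and run the same case distinction on the location of $p$ and $q$ as in Lemma~\ref{C has bisimplicial}. The productive case is the one in which the bisimplicial edge yields a set $\Omega\subseteq A\cap C$ every vertex of which is adjacent to all of $S$; splitting $G[C]$ at $\Omega$ and recursing on a smaller close component $C_1$ of $G[C]-\Omega$ returns an adjacent pair there, and the complete adjacency of $\Omega$ to $S$ lets me transport that pair back to an adjacent pair of $G$ with the required neighborhoods in $S$.

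The main obstacle is exactly that a single pair must dominate \emph{all} of $S_A$ and \emph{all} of $S_B$ simultaneously. This is why one cannot simply quote Lemma~\ref{C has bisimplicial}: a bisimplicial edge of $C$ may sit on a pendant path far from $S$ and be adjacent to no vertex of $S$ at all, so the edge it hands back carries no information about $S_A$ or $S_B$. In the equivalent shortest-path formulation, where one takes a shortest path from $x$ to $y$ in $C$ and closes it through an edge of $S$ between $S_A$ and $S_B$ into an even cycle of length at least six, chordal bipartiteness forces a chord; but a chord from a vertex of $S_B$ to the path only certifies adjacency to that one chosen vertex, not to the whole of $S_B$. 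Meeting every vertex of $S_B$ at once is the crux, and I expect to force it using that $S$ is complete bipartite (Lemma~\ref{min sep is CB}) together with the nested-neighborhood structure of the simple-elimination ordering of the strongly chordal graph $G_A$ (Theorems~\ref{dahlhaus} and~\ref{simple vertex}), which makes the neighborhoods in $C$ of the vertices of $S_B$ comparable and hence simultaneously met by one vertex.
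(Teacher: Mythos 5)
Your reduction to the induced subgraph on $S+C+C'$, the observation that $|C|\geq 2$ (so that $C$ has an edge), and the base case $|C|=2$ are all sound, and you correctly isolate the real content of the statement: Lemma~\ref{simplicial} and its mirror image give the two vertices separately, and everything hinges on making them adjacent. The paper itself prints no proof of this corollary; it is evidently meant to fall out of the case analyses in the proofs of Lemmas~\ref{simplicial} and~\ref{C has bisimplicial}, whose terminal (non-inductive) cases produce either a single vertex adjacent to all of $S$ (impossible here, since $S$ meets both color classes) or exactly the adjacent pair $p,q$ with $N(p)\cap S=S\cap B$ and $N(q)\cap S=S\cap A$. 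Measured against that intended derivation, your proposal has a genuine gap: the adjacency is never actually established.

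Concretely, three things go wrong in your inductive step. First, the case you call ``productive'' --- a bisimplicial edge yielding a set $\Omega$ completely joined to $S$ --- arises in the proof of Lemma~\ref{C has bisimplicial} only when $C'=\{q\}$ and $N(q)=S$, which forces $S\cap B=\varnothing$; that configuration is excluded by the corollary's hypothesis, so it cannot drive your induction (and note that $\Omega=N(p)\cap C$ there lies in $B$, not in $A$ as you write, so ``adjacent to all of $S$'' only ever means all of one color class). Second, the recursive calls in that proof are made with respect to a \emph{different} minimal separator $S'\subseteq S+\Omega$ attached to a component $C_1$ of $G[C]-\Omega$; the inductive hypothesis returns an adjacent pair covering $S'\cap A$ and $S'\cap B$, and since a vertex of $S$ may have no neighbor in $C_1$ at all, this does not yield coverage of $S\cap A$ and $S\cap B$ --- the ``transport back'' you invoke is precisely the missing argument. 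Third, your closing paragraph concedes the crux: you only ``expect'' to force a single vertex of $N(x)\cap C$ to meet all of $S\cap B$, via an unproved comparability claim for the traces $N(b)\cap C$, $b\in S\cap B$; and even granting that claim, it produces a vertex adjacent to all of $S\cap B$ somewhere in $C$, not one inside $N(x)\cap C$ as your own reformulation requires (indeed the $x$ supplied by Lemma~\ref{simplicial} may itself need to be re-chosen). As written, the proposal is a plausible plan, not a proof.
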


\begin{corollary}
Let $G$ be chordal bipartite. 
The number of minimal separators in $G$  
is $O(n+m)$. 
\end{corollary}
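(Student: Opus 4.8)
The plan is to combine the structural lemmas already proved with the standard description of minimal separators as neighbourhoods of close (equivalently, full) components, and then to convert this into a linear count using a simple elimination ordering of $G_A$. First, recall that a set $S$ is a minimal separator precisely when $G-S$ has at least two components $C$ with $N(C)=S$, and that the ``close'' components guaranteed by minimality are exactly these. Fix a minimal separator $S$ together with two close components $C_1,C_2$. If $S\cap A\neq\es$, then Lemma~\ref{simplicial} supplies, for each $i$, a vertex $x_i\in C_i$ with $N(x_i)\cap S=S\cap A$, and necessarily $x_i\in B$. I claim $S\cap A=N(x_1)\cap N(x_2)$: the inclusion $\subseteq$ is the lemma, while for $\supseteq$ any common neighbour of $x_1\in C_1$ and $x_2\in C_2$ is adjacent to vertices of two different components of $G-S$ and hence lies in $S$, and it lies in $A$ because $x_1,x_2\in B$. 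Symmetrically, if $S\cap B\neq\es$ there are $y_1\in C_1\cap A$ and $y_2\in C_2\cap A$ with $S\cap B=N(y_1)\cap N(y_2)$. So, using Lemma~\ref{min sep is CB}, every minimal separator is the disjoint union of an ``$A$-part'' and a ``$B$-part'', each realised as an intersection of two neighbourhoods whose centres lie in two \emph{distinct} close components and which together induce a complete bipartite graph.

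With this description it suffices to bound the number of distinct sets that can occur as $A$-parts (the $B$-parts being symmetric), and then to control how the parts may be glued. For the first task I would invoke Theorem~\ref{dahlhaus}: since $G_A$ is strongly chordal, Theorem~\ref{simple vertex} gives a simple elimination ordering, and for a simple vertex $x\in B$ the $G$-neighbourhoods of the $A$-neighbours of $x$ are linearly ordered by inclusion. I would process the vertices in this order and charge a minimal separator $S$ to its witness $x\in B$ that is eliminated first, together with the event that $N(x)$ is ``cut down'' to $S\cap A=N(x)\cap N(x_2)$ by a second witness $x_2$ lying in a different close component. The inclusion-nesting forced by simplicity is what I expect to prevent a vertex from being charged by more than a constant number of separators, since once $x$ is fixed the candidate intersections $N(x)\cap N(x_2)$ form a chain and only the ones admitting a genuine second full component on the far side survive; summing over all vertices then yields $O(n)$ distinct $A$-parts and, symmetrically, $O(n)$ distinct $B$-parts.

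Assembling the final bound requires accounting for the edge term. Because $G[S]$ is complete bipartite (Lemma~\ref{min sep is CB}), a valid separator is not an arbitrary pairing of an $A$-part with a $B$-part: every vertex of the $A$-part is adjacent to every vertex of the $B$-part, so the pair determines a biclique, and I expect two separators sharing an $A$-part but differing in their $B$-part to be distinguishable by an edge incident to that biclique, which is where the additional $O(m)$ contribution should arise, giving the total $O(n+m)$. The main obstacle is exactly the passage from the naive ``unordered pair of witnesses'' viewpoint, which only delivers the quadratic estimate $O(n^2)$, to a genuinely linear count: one must show that the requirement that the two witnesses sit in \emph{different} close components, combined with the nesting of neighbourhoods supplied by the simple elimination ordering, collapses the abundance of candidate intersections $N(x_1)\cap N(x_2)$ down to $O(n+m)$ that are actually minimal separators. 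Verifying that the charging is $O(1)$ per vertex and per edge — that no vertex or edge is blamed for more than a bounded number of distinct separators — is the step where the strong-chordality structure of Theorems~\ref{dahlhaus} and~\ref{simple vertex} has to be exploited most carefully, and it is where I expect the real work of the proof to lie.
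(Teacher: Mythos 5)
Your first paragraph is correct as far as it goes: for a minimal separator $S$ with close components $C_1,C_2$, Lemma~\ref{simplicial} does supply witnesses $x_i\in C_i\cap B$ with $N(x_i)\cap S=S\cap A$, and the identity $S\cap A=N(x_1)\cap N(x_2)$ (and its analogue for $S\cap B$) follows exactly as you argue. But this parametrises each separator by an unordered pair of vertices per color class, which is precisely the quadratic-type estimate (cf.\ the $O(n+\binom{m}{2})$ bound of Kratsch mentioned in the paper's own Remark) that this corollary is meant to improve. The step that would bring the count down to $O(n+m)$ --- your charging of each separator to a first-eliminated witness in a simple elimination ordering, together with an $O(1)$ bound on the number of separators charged to any one vertex or edge --- is never carried out: it is stated only as an expectation (``I would invoke'', ``I expect'', ``is where I expect the real work of the proof to lie''), and the claim that two separators sharing an $A$-part are ``distinguishable by an edge incident to that biclique'' is not substantiated. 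As written, the proposal establishes only the quadratic bound; the genuinely missing content is exactly the linear counting argument.

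The gap is partly a consequence of starting from the wrong structural statement. The intended source of the linear bound is the corollary immediately preceding this one (a consequence of Lemmas~\ref{simplicial} and~\ref{C has bisimplicial}): a minimal separator $S$ meeting both color classes has, inside a \emph{single} close component $C$, an \emph{edge} $(x,y)$ with $N(x)\cap S=S\cap A$ and $N(y)\cap S=S\cap B$. Since $N(x)\cup N(y)\subseteq C\cup S$, this yields $S=(N(x)\cup N(y))\setminus C$, so such separators are charged to edges of $G$; separators contained in one color class satisfy $S=N(x)\setminus C$ for a single vertex $x$ of a close component by Lemma~\ref{simplicial}. That is where the $m$ and the $n$ in the bound come from. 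Even on that route one must still verify that each edge and each vertex is charged only $O(1)$ times (the component $C$ is not determined by the edge alone), so some form of the injectivity argument you postpone is genuinely required --- but it has to be made for single edges and vertices, not for pairs of witnesses drawn from two different components, and your proposal supplies it for neither.
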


\begin{remark}
A somewhat weaker upperbound for the number of minimal 
separators in a chordal bipartite graph, 
{\em i.e.\/}, 
$O\left(n+\binom{m}{2}\right)$, 
was obtained in~\cite{kn:kratsch2} (see also~\cite{kn:brandstadt}). 
Note that there exists an algorithm with polynomial delay that 
lists all the minimal separators 
of an arbitrary graph% 
~\cite{kn:kloks2}. 
\end{remark}

\section{Maximal chordal bipartite graphs}
%%%%%%%%%%%%%%%%%%%%%%%%%%%%%%%%%%%%%%%%%%
\label{maximal CBG}

Let $k$ be a natural number. A $k$-tree is a chordal graph, defined 
recursively as follows~\cite{kn:beineke}. 
A $k$-tree on $k+1$ vertices is a clique. Given a $k$-tree $G$ with 
$n$ vertices, one can obtain a $k$-tree with $n+1$ vertices by 
introducing a new vertex and by making that adjacent to a clique with 
$k$ vertices in $G$.

\bigskip
Note that $k$-trees have a decomposition tree of the following form. 
It is a rooted tree with points colored from a set of $k+1$ colors. 
The first $k+1$ vertices closest to the root form a simple path and 
all the points on this path have different colors. 
The rest of the tree branches arbitrarily and there is no restriction 
on the coloring of the points. 

Note that we can assign a unique $(k+1)$-clique to each point in the tree 
as follows. For the points that are in the 
simple path attached to the root it is a $(k+1)$-clique on the points 
of the path. For any other point $x$, the clique is the same as that of 
its parent, except that the point that has the same color as $x$ is 
replaced by $x$.    

It follows that a decomposition tree provides a proper coloring 
of the vertices of a $k$-tree with $k+1$ colors, that is, 
no two adjacent vertices receive the same color. By the recursive 
definition of a $k$-tree, this coloring is unique up to a permutation 
of the colors.  

\begin{lemma}
Let $G=(V,E)$ be a $k$-tree. Let $T$ be a decomposition 
tree for $G$.   
Then for any subset $S \subseteq \{1,\ldots,k+1\}$ 
the graph induced by the vertices with a color in $S$ 
is an $(|S|-1)$-tree. 
\end{lemma}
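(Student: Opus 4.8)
The plan is to build an explicit decomposition of $G[V_S]$ as an $(|S|-1)$-tree directly from $T$, by running the recursive $k$-tree construction encoded by $T$ and simply ignoring every vertex whose color is not in $S$. Write $m=|S|$ and, for a vertex $v$ of $G$, write $c(v)\in\{1,\dots,k+1\}$ for its color in $T$ and $K_v$ for the $(k+1)$-clique that $T$ assigns to $v$; let $V_S$ denote the set of vertices whose color lies in $S$. First I would order the vertices of $G$ so that the $k+1$ root-path vertices come first (in their path order) and every other vertex comes after its parent in $T$; this is a legal order in which to build $G$ as a $k$-tree. Restricting this order to $V_S$ yields a candidate order in which to build $G[V_S]$, and the whole argument reduces to checking that each vertex of $V_S$, when it is added, attaches to an $(m-1)$-clique of the vertices of $V_S$ already present.

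The key structural step is to identify these back-neighbours exactly. Tracing the clique assignment down from the root, the vertex of color $c$ inside $K_x$ is precisely the nearest ancestor of $x$ in $T$ carrying color $c$ (with $x$ itself playing this role for $c=c(x)$). Hence the $k$-clique to which $x$ is attached when $G$ is built consists of the nearest ancestor of each color $c\neq c(x)$, and these are exactly the neighbours of $x$ among the earlier vertices. I would then restrict to $S$: for a vertex $x\in V_S$ the earlier neighbours lying in $V_S$ are exactly the nearest ancestors of the colors $c\in S\setminus\{c(x)\}$. There are $m-1$ such colors, each such ancestor exists because the root path already carries every color, these $m-1$ vertices are pairwise adjacent since they all lie in the clique $K_x$, and each precedes $x$ in the chosen order because it is an ancestor. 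Thus $x$ is joined to exactly an $(m-1)$-clique of previously placed vertices of $V_S$.

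It remains to package this as a genuine $(m-1)$-tree. The initial object is the set of root-path vertices with colors in $S$: since the root path uses all $k+1$ colors exactly once, this set has $m$ vertices and, being contained in the initial $(k+1)$-clique of $G$, induces an $m$-clique, which is exactly the starting $m=((m-1)+1)$-clique of an $(m-1)$-tree. Every later vertex of $V_S$ is then added attached to the $(m-1)$-clique identified above, which is precisely the recursive step defining an $(m-1)$-tree. To finish I would verify that the edge set produced by this construction coincides with $E(G[V_S])$: any edge $xy$ with $x,y\in V_S$ and $y$ earlier than $x$ forces $y$ to be a back-neighbour of $x$ in $G$, hence one of the nearest ancestors, so the edge is created at $x$'s step (or lies inside the initial clique when both endpoints are on the root path); conversely every edge created by the construction is an edge of $G$ between two vertices of $V_S$. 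I expect the main obstacle to be this exact identification of back-neighbours — pinning down that the neighbours of $x$ among the earlier vertices are precisely the per-color nearest ancestors, and hence that restricting to $S$ introduces neither too few nor too many edges — whereas the assembly into an $(m-1)$-tree and the degenerate small-$m$ cases are routine.
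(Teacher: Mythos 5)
Your proposal is correct and follows essentially the same route as the paper's proof: both rest on the observation that the back-neighbours of a vertex $x$ are exactly the nearest ancestors of $x$ in $T$ of each colour other than $c(x)$, and both then obtain the $(|S|-1)$-tree by restricting the root-clique and the per-colour nearest-ancestor attachment to the colours in $S$. Your write-up is merely more explicit than the paper's two-sentence sketch, in particular in verifying that the restricted construction produces exactly the edge set of the induced subgraph.
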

\begin{proof}
Consider a vertex $x$ with color $i$. Consider the path $P$ from $x$ to 
the root. Then, for any $j \neq i$, 
the vertex $x$ is adjacent to that vertex with color $j \neq i$ 
on $P$ that is furthest from the root. 

Let $S$ be a subset of the colors. Construct a $(|S|-1)$-tree as follows. 
The root-clique is the subset of the root-clique of $T$ with colors 
in $S$. For any vertex with a color in $S$, make it adjacent to the 
vertex on its path to the root in $T$ that is furthest from the root 
and that has a color in $S$. 
\qed\end{proof}

\begin{remark}
Note that in a decomposition tree of a sun-free $k$-tree,  
for every node the children are colored with at most two 
different colors. This holds true for the decomposition tree 
induced by any subset of the colors. 
\end{remark}

\bigskip
The following lemma is basically the same as Theorem~\ref{dahlhaus}. 
We include it because it eases the description of the following 
decomposition of chordal bipartite graphs. 
    
\begin{lemma}[\cite{kn:farber,kn:ho,kn:lehel}]%
\label{G* is SC}
Let $G=(A,B,E)$ be chordal bipartite. Let $G_B^{\ast}$ be the graph 
obtained from $G$ by making a clique of every neighborhood of a 
vertex in $A$. Then $G_B^{\ast}$ is strongly chordal. 
\end{lemma}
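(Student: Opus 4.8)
The plan is to verify the two defining properties of a strongly chordal graph separately: that $G_B^{\ast}$ is chordal and that it contains no induced sun. Throughout I would use the key structural observation that every vertex $a \in A$ is \emph{simplicial} in $G_B^{\ast}$, since by construction $N_{G_B^{\ast}}(a)=N_G(a)$ is made into a clique and no edges are added inside $A$. A simplicial vertex has all its neighbours pairwise adjacent, so it can lie on no induced cycle of length at least four, and it cannot serve as a central clique-vertex $c_i$ of a sun (each $c_i$ has two non-adjacent neighbours $s_{i-1},s_i$). Hence every induced cycle of length at least four lives inside $B$, and in any induced sun the whole central clique $C$ lies in $B$, while the vertices of $A$ can occur only among the $s_i$.

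First I would prove chordality. Suppose $G_B^{\ast}[B]$ contains a chordless cycle $[c_1,\dots,c_k]$ with $k\ge 4$. Each edge $c_ic_{i+1}$ exists only because $c_i$ and $c_{i+1}$ have a common neighbour $a_i\in A$ (indices modulo $k$). I would then check that the closed walk $c_1,a_1,c_2,a_2,\dots,c_k,a_k$ is in fact a chordless cycle in $G$: the $a_i$ are pairwise distinct and no $c_i$ is adjacent to an $a_j$ with $j\notin\{i-1,i\}$, because any such coincidence or chord would give a common neighbour of two non-consecutive vertices $c_i,c_j$, i.e.\ a chord of the original cycle, contradicting that $[c_1,\dots,c_k]$ is induced. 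This produces an induced cycle of length $2k\ge 8$ in $G$, contradicting that $G$ is chordal bipartite. Hence $G_B^{\ast}[B]$, and therefore $G_B^{\ast}$, is chordal.

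Next I would rule out induced suns by the same strategy of lifting a forbidden configuration back into $G$ that underlies Theorem~\ref{dahlhaus}. Assume $G_B^{\ast}$ contains an induced $k$-sun with clique $C=\{c_1,\dots,c_k\}\subseteq B$ and independent set $S=\{s_1,\dots,s_k\}$, where $s_i$ is adjacent to $c_i$ and $c_{i+1}$ only. In the favourable case where every $s_i\in A$, the alternating cycle $c_1,s_1,c_2,s_2,\dots,c_k,s_k$ is immediately an induced cycle of length $2k\ge 6$ in $G$: any chord $c_is_j$ would force $c_i\in N_G(s_j)\cap C=\{c_j,c_{j+1}\}$, hence $j\in\{i-1,i\}$, so no chord exists. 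This again contradicts chordal bipartiteness.

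The hard part will be the mixed case, in which some $s_i$ lie in $B$. For such an $s_i$ the two sun-edges $s_ic_i$ and $s_ic_{i+1}$ are only witnessed by common neighbours $p_i,q_i\in A$, so to lift the sun into $G$ I must splice a path $c_i,p_i,s_i,q_i,c_{i+1}$ into the cycle in place of the single vertex $s_i$. The resulting closed walk is long enough to be forbidden, but I have to choose the witnesses carefully and argue that chordal bipartiteness prunes away every potential chord incident with $p_i,q_i$ or $s_i$; each such chord would either shortcut the walk to a still-forbidden shorter cycle or manufacture a common neighbour contradicting one of the sun's non-edges. Making this pruning argument uniform over all $k$ and all patterns of membership of the $s_i$ in $A$ or $B$ is the main obstacle. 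An alternative that sidesteps the case analysis is to invoke Theorem~\ref{simple vertex} directly: one shows that every induced subgraph of $G_B^{\ast}$ has a simple vertex, exploiting the totally balanced structure of the biadjacency matrix of $G$ (equivalently, a doubly lexical ordering of its rows and columns), which is the route of the cited work of Farber, Ho and Lehel.
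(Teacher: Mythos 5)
Your chordality half is essentially sound: every $a\in A$ is simplicial in $G_B^{\ast}$ because $N_{G_B^{\ast}}(a)=N_G(a)$ is a clique, so any induced cycle of length at least four lies in $B$, and lifting it through witnesses $a_i\in A$ yields an induced cycle of length $2k\geq 8$ in $G$. The genuine gap is in the sun-freeness half. You correctly force $C\subseteq B$ and dispose of the case $S\subseteq A$, but the mixed case --- some $s_i\in B$, so that \emph{both} sun edges at $s_i$ are fill edges, each witnessed only by a common neighbour in $A$ --- is precisely where you stop. Splicing $c_i,p_i,s_i,q_i,c_{i+1}$ into the cycle does not obviously produce an induced cycle: the witnesses $p_i,q_i$ may coincide, may be adjacent to other $c_j$'s or to witnesses of other spliced vertices, and the resulting shortcuts can collapse the walk below length six rather than contradict chordal bipartiteness. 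You yourself call this ``the main obstacle'' and leave it unresolved, so as written the proof is incomplete. The fallback you mention in the last sentence (show that every induced subgraph of $G_B^{\ast}$ has a simple vertex) is only gestured at, not carried out.

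For comparison, the paper does not argue via forbidden subgraphs at all. It works entirely through the simple-vertex characterization (Theorem~\ref{simple vertex}): it takes a vertex $x\in A$ that is simple in $G_B$ (the graph in which $B$ is completed to a clique; such an $x$ exists by Theorem~\ref{dahlhaus} with the roles of $A$ and $B$ exchanged), shows by a short private-neighbour argument that $x$ remains simple in $G_B^{\ast}$, and then peels off any $y\in N(x)$ whose only neighbour in $A$ is $x$, obtaining a simple elimination ordering of all of $G_B^{\ast}$. To repair your proof, either complete the mixed-case chord analysis for suns or switch to this simple-vertex route, which avoids the case distinction entirely.
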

\begin{proof}
We write $N^{\ast}(x)$ for the neighborhood of a vertex $x$ in 
$G_B^{\ast}$. 

Consider a vertex $x \in A$ which is simple in $G_B$. 
We claim that $x$ is simple in $G_B^{\ast}$. Assume this is not the case. 
Then there exist two vertices $p$ and $q$ in $N(x)$ that have `private 
neighbors' $p^{\prime}$ and $q^{\prime}$ in $G_B^{\ast}$. That is, 
\[p^{\prime} \in N^{\ast}(p)-N^{\ast}(q) \quad\text{and}\quad 
q^{\prime} \in N^{\ast}(q) -N^{\ast}(p).\]   
Assume that $p^{\prime} \in A$ 
and that $q^{\prime} \in B$. Since $q^{\prime}$ is adjacent to $q$ in 
$G_B^{\ast}$ there exists a vertrex $q^{\prime\prime} \in A$ 
which is adjacent to $q$ and $q^{\prime}$. Note that 
$q^{\prime\prime} \neq p^{\prime}$ since $p^{\prime}$ is not adjacent to 
$q$. Also, $q^{\prime\prime}$ is 
not adjacent to $p$ since $q^{\prime}$ is not adjacent to $p$. 
Then $x$ is not simple in $G_B$. The case where 
$p^{\prime}$ and $q^{\prime}$ are both in $B$ is similar.  

This proves that $G_B^{\ast}$ has a simple elimination ordering 
of the vertices in $A$. 
Let $x \in A$ be simple in $G_B^{\ast}$. If some vertex 
$y \in N(x)$ has no other neighbors in $A$ than $x$, then 
it is simple in $G_B^{\ast}-x$. Thus we obtain an augmented 
simple elimination ordering of all the vertices in $G_B^{\ast}$. 

This proves the lemma.     
\qed\end{proof}

{F}rom~\cite{kn:anstee} we have the following embedding theorem. 
An easier proof of this is described in~\cite{kn:lehel}.  

\begin{theorem}[\cite{kn:anstee,kn:lehel}]
\label{anstee}
Let $G=(A,B,E)$ be chordal bipartite and let $\omega+1$ be the 
maximal cardinality of a clique in the graph $G_B^{\ast}-A$. 
There exists a sequence $T_0,\ldots,T_{\omega}$ such that, 
for $i=0,\ldots,\omega$ the following holds.  
\begin{enumerate}[\rm 1.]
\item $T_0$ is a spanning tree on the vertices 
of $B$ such that for each vertex $x \in A$ with $N(x) \neq \es$, 
the set $N(x)$ 
induces a subtree of $T_0$; 
\item For $i=1 \geq 1$, $T_i=(B,E_i)$ is a strongly chordal 
$i$-tree and it spans the vertices of $B$; 
\item For $i \geq 1$, each $(i+1)$-clique in $T_i$ is the union of two 
$i$-cliques in $T_{i-1}$; 
\item For each vertex $x \in A$ with $N(x) \neq \es$, the set $N(x)$ 
appears as a maximal clique in one of the $T_i$'s. 
\end{enumerate}
\end{theorem}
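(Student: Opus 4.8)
The plan is to forget the bipartite graph $G$ almost entirely and work with the strongly chordal graph $H=G_B^{\ast}-A$ on the vertex set $B$. By Lemma~\ref{G* is SC} the graph $G_B^{\ast}$ is strongly chordal, and since strong chordality is hereditary, $H$ is strongly chordal with clique number $\omega+1$. By the construction of $G_B^{\ast}$, every nonempty neighborhood $N(x)$ with $x\in A$ is a clique of $H$, and the edge set of $H$ is exactly the union of these cliques. Hence it suffices to exhibit, for the pair consisting of $H$ and the clique family $\mathcal{N}=\{\,N(x)\mid x\in A,\ N(x)\neq\es\,\}$, a sequence $T_0,\ldots,T_{\omega}$ of spanning trees and $i$-trees on $B$ satisfying conditions 1--4; note that condition 4 asks precisely that a clique $N(x)$ of size $i+1$ turn up as a maximal clique of $T_i$.

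First I would construct the top graph $T_{\omega}$: a sun-free (equivalently strongly chordal) $\omega$-tree on $B$ that contains $H$ and in which each $N(x)$ sits as a maximal clique at level $|N(x)|-1$. Using Theorem~\ref{simple vertex} I fix a simple elimination ordering of $B$ and build $T_{\omega}$ by reinserting the vertices in reverse order. When a simple vertex $v$ is reinserted, the nested chain $N[x_1]\subseteq N[x_2]\subseteq\cdots\subseteq N[x_{\ell}]$ of closed neighborhoods inside $N[v]$ guaranteed by simplicity singles out one clique at each size, and I attach $v$ to an $\omega$-clique extending this chain. The nestedness is exactly what keeps the result sun-free and lets me place each $N(x)$ as a maximal clique of the appropriate level. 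I then take the decomposition tree of $T_{\omega}$ together with its canonical proper $(\omega+1)$-coloring.

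Next I would read off $T_{\omega-1},\ldots,T_1$ and then $T_0$ from this decomposition tree using the machinery of Section~\ref{maximal CBG}. The key is the Remark there: in the decomposition tree of a sun-free $k$-tree the children of every node carry at most two colors. For each maximal $(i+1)$-clique this two-color property singles out a canonical pair of $i$-cliques whose union it is; declaring these to be the maximal cliques of $T_{i-1}$ defines $T_{i-1}$, and the color-subset lemma of Section~\ref{maximal CBG} together with induction shows that $T_{i-1}$ is again a sun-free spanning $(i-1)$-tree. Conditions 2 and 3 are then immediate. At the bottom of the descent $T_1$ is a spanning tree, and because every split keeps an overlap of all but one vertex, each clique of $H$ — in particular each $N(x)$ — stays connected all the way down; taking $T_0:=T_1$ therefore yields the subtree-model property required in condition 1, while condition 4 merely records the placement already achieved in the construction of $T_{\omega}$.

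The main obstacle is the simultaneity. It is easy to complete $H$ to some sun-free $\omega$-tree and easy to thin an $\omega$-tree one level at a time; the difficulty is to do both so that every neighborhood $N(x)$ is closed off as a maximal clique at exactly its own level $|N(x)|-1$ while the downward splitting stays globally consistent — so that each $T_i$ is a genuine spanning $i$-tree and condition 3 holds for all maximal cliques at once, not merely one at a time. The two-colors-among-children property is the structural fact that rules out conflicting demands on how a clique must split, and the delicate point will be to verify that this property is inherited at every level of the descent and that deleting the chosen color still leaves cliques covering all of $B$.
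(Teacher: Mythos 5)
The first thing to note is that the paper does not prove Theorem~\ref{anstee} at all: it is imported from Anstee and Lehel, and the only related material in the paper is the informal description in Section~\ref{maximal CBG} of Lehel's \emph{bottom-up} construction, which starts from the tree $T_0$ (the hypertree model of the neighbourhood hypergraph of the totally balanced matrix) and obtains $T_k$ from $T_{k-1}$ by taking a spanning tree of the line graph of a clique tree of $T_{k-1}$, so that each new maximal clique is the union of two adjacent maximal cliques one level down. Your proposal runs in the opposite direction, top-down from $T_{\omega}$, and it is not a proof: it is a plan whose two load-bearing steps are left unverified, as you in fact concede in your last paragraph.

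Concretely. (1) The construction of $T_{\omega}$ is only asserted. Reinserting a simple vertex $v$ in reverse elimination order requires an $\omega$-clique of the partially built $\omega$-tree containing the already-inserted part of $N_H(v)$; you show neither that such a clique exists nor that attaching $v$ to it keeps the result sun-free. Worse, a set $N(x)$ with $|N(x)|<\omega+1$ cannot be a maximal clique of an $\omega$-tree, so the phrase ``sits as a maximal clique at level $|N(x)|-1$'' is not a property of $T_{\omega}$ at all; it is precisely condition~4, i.e.\ the content of the theorem, deferred to the descent. (2) The descent step fails as stated: the colour-subset lemma of Section~\ref{maximal CBG} produces an $(|S|-1)$-tree on the vertices \emph{coloured from $S$ only}, hence on a proper subset of $B$, so it cannot yield the required spanning $(i-1)$-tree on all of $B$; your alternative --- picking one canonical pair of $i$-cliques inside each maximal $(i+1)$-clique --- is never shown to produce a globally coherent $(i-1)$-tree (the chosen $i$-cliques must cover $B$ and glue along $(i-1)$-cliques in a tree-like fashion), and condition~1, that every $N(x)$ induces a subtree of $T_0$, is not derived from ``stays connected all the way down.'' The two-colour remark you invoke constrains children of a node in the decomposition tree, but it does not by itself resolve which of the conflicting splits to take when several hyperedges $N(x)$ of different sizes sit inside the same maximal clique. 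If you want a complete argument, follow the cited bottom-up route: establish $T_0$ from the hypertree property of the neighbourhood hypergraph and then prove conditions~2--4 inductively while ascending.
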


Consider the $0/1$-adjacency matrix $Q$ with the rows 
indexed by the vertices in $A$ and the columns indexed by the vertices 
in $B$. 
Note that there may be multiple 
copies of identical rows in $Q$; the model does not reflect this fact.  
For any two rows in $Q$ we may add the intersection if it is not 
present already; 
the new matrix is the incidence matrix of a chordal bipartite graph. 
Possibly some rows in $Q$ correspond to $k$-cliques that are 
contained only in one $(k+1)$-clique 
(see, {\em e.g.\/},~\cite{kn:hansen,kn:marcu}). 
We can extend the $k$-trees with additional 
cliques such that each $k$-tree spans all vertices of $B$. 

\bigskip
We describe the data structure that we use in the next section. 
Consider a 
strongly chordal 
$(k-1)$-tree $T_{k-1}$. Consider the maximal clique-tree $T$ for $T_{k-1}$, 
obtained as described at the start of this section. The $k$-tree $T_k$ 
is obtained by a procedure which can be described as 
follows~\cite{kn:lehel}. 
Consider the linegraph $L$ of $T$; thus $L$ is a claw-free blockgraph. 
Let $T^{\prime}$ be a spanning tree of $L$. Each vertex of $T^{\prime}$ 
is a $(k+1)$-clique, which is the union of the two $k$-cliques that are the 
endpoints of the corresponding line in $T$. 
 
\bigskip  
Let $H$ be a $k$-tree on $n$ vertices. Then all maximal cliques 
in $H$ have $k+1$ vertices and there are  
$n-k$ of them. This follows by a simple induction 
from the recursive definition 
of $k$-trees.  
It follows from Theorem~\ref{anstee} 
that a chordal bipartite graph $G=(A,B,E)$ can be embedded into a 
maximal chordal bipartite graph $G^{\prime}=(A^{\prime},B,E)$ with 
\[|A^{\prime}|^{\ast} = \sum_{k=0}^{|B|} (|B|-k) = \sum_{k=0}^{|B|} k
= \binom{|B|+1}{2},\]
where $|A^{\prime}|^{\ast}$ does not take into account 
multiple copies of vertices in $A$ with the same 
neighborhood~\cite{kn:anstee,kn:lehel}. Here we 
assume that the matrix $A$ has no row that contains only zeros (otherwise 
1 should be added to the formula above).  

Let $G=(A,B,E)$ be chordal bipartite. A maximal embedding 
of $G$ is a chordal bipartite graph $G^{\prime}$ obtained as described 
above. A chordal bipartite graph is maximal if its bipartite 
adjacency matrix has a maximal number of different rows.  

\section{Feedback vertex set on chordal bipartite graphs}
%%%%%%%%%%%%%%%%%%%%%%%%%%%%%%%%%%%%%%%%%%%%%%%%%%%%%%%%%

\begin{lemma}
\label{basic}
Let $G=(A,B,E)$ be complete bipartite. 
Let $F$ be a feedback vertex set of $G$. Then 
\[ |A - F| \leq 1 \quad\text{or}\quad |B -F| \leq 1.\]
\end{lemma}
\begin{proof}
If $A$ and $B$ both have two vertices that are not in $F$ then 
$G-F$ has an induced 4-cycle. 
\qed\end{proof}

\begin{definition}
Let $G=(A,B,E)$ be a bipartite graph. A {\em hyperedge\/} is 
the neighborhood of a vertex in $A$. 
\end{definition}

\noindent
Note that different vertices in $A$ may define the same hyperedge. 

\begin{lemma}
\label{disjoint}
Let $G=(A,B,E)$ be chordal bipartite and let $R$ be a hyperedge. 
Assume that there exist hyperedges $A$ and $B$ such that 
\begin{enumerate}[\rm (i)]
\item $|A \cap R|=|B \cap R|=1$, and 
\item $A \cap R \neq B \cap R$. 
\end{enumerate}
Then $A \cap B =\es$. 
\end{lemma}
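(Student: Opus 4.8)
The plan is to argue by contradiction and produce a chordless $6$-cycle. Suppose $A \cap B \neq \es$ and fix a vertex $w \in A \cap B$. Since every hyperedge is the neighborhood of a vertex in one color class, the hyperedges $A$, $B$, $R$ are subsets of the \emph{other} color class, and $w$ belongs to that same class; accordingly I would write $A = N(a)$, $B = N(b)$, $R = N(r)$ for witnessing vertices $a,b,r$ (here the letters $A,B$ name hyperedges, overloading the names of the color classes). Writing $A \cap R = \{u\}$ and $B \cap R = \{v\}$, hypothesis (ii) gives $u \neq v$. From the memberships $u,w \in A$, $u,v \in R$, and $v,w \in B$ I can read off the six adjacencies $a \sim u$, $a \sim w$, $r \sim u$, $r \sim v$, $b \sim v$, $b \sim w$, so that $[a,u,r,v,b,w]$ is a closed walk of length six.

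First I would verify that these six vertices are distinct. On the hyperedge-side, $u \neq v$ is given; moreover $u \notin B$ (otherwise $u \in B \cap R = \{v\}$), while $w \in B$, so $w \neq u$, and symmetrically $w \neq v$. On the witness-vertex side, $a = r$ would force $R = A$ and hence $|R| = 1$, contradicting that $u,v$ are distinct elements of $R$; likewise $b = r$ is impossible, and $a = b$ would give $A \cap R = B \cap R$, contradicting (ii). Thus all six vertices differ.

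The heart of the argument is to show the walk is an \emph{induced} $C_6$, i.e.\ that the three non-consecutive cross-pairs are non-edges. Two are immediate: $a \not\sim v$ because $v \notin A$ (else $v \in A \cap R = \{u\}$), and $b \not\sim u$ because $u \notin B$. The delicate pair is $(r,w)$: since $w \in A$, if also $w \in R$ then $w \in A \cap R = \{u\}$, forcing $w = u$, which has already been excluded; hence $w \notin R$ and $r \not\sim w$. This is the one place where real care is needed — the point is that membership of $w$ in the singleton intersection is exactly what the cardinality-one hypotheses forbid — while the remaining verifications are bookkeeping. With all three chords absent, $\{a,u,r,v,b,w\}$ induces a chordless cycle of length six, contradicting that $G$ is chordal bipartite; therefore $A \cap B = \es$.
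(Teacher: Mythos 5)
Your proof is correct, but it takes a genuinely different route from the paper's. The paper invokes a known characterization of chordal bipartite graphs (attributed to Acharya and Las~Vergnas): for any three vertices $x$, $y$, $z$ in one color class, at least one of $N(x)$, $N(y)$, $N(z)$ contains the intersection of the other two. It then simply asserts that this characterization proves the lemma; the implicit case analysis is short (if $R\supseteq A\cap B\neq\es$ then the nonempty set $A\cap B$ lies inside both singletons $A\cap R$ and $B\cap R$, forcing them equal; if $A\supseteq B\cap R$ or $B\supseteq A\cap R$ the two singletons are again forced equal; each case contradicts~(ii)). You instead argue from first principles: assuming $w\in A\cap B$, you assemble the closed walk $[a,u,r,v,b,w]$, verify that the six vertices are distinct, observe that bipartiteness disposes of all same-class pairs so that only the three cross-pairs $(a,v)$, $(b,u)$, $(r,w)$ could be chords, and rule each out using exactly the cardinality-one hypotheses, obtaining an induced $C_6$. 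All of these checks are sound. In effect you have re-proved the relevant direction of the cited characterization in the special case needed here; what your version buys is a self-contained argument relying only on the definition of chordal bipartite (no external reference), at the cost of the longer distinctness and chord bookkeeping, whereas the paper's version is shorter but leans entirely on an imported theorem and leaves the final deduction to the reader.
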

\begin{proof}
To prove this we use the following characterization   
of chordal bipartite graphs 
(see, {\em e.g.\/},~\cite[Proposition~3]{kn:acharya}). 
Let $G=(A,B,E)$ be a bipartite graph. Then $G$ is 
chordal bipartite if and only if   
for any three vertices $x$, $y$ and $z$ in $A$  
at least one of $N(x)$, $N(y)$ and $N(z)$ contains the 
intersection of the other two. 

This characterization proves the lemma.
\qed\end{proof}

\begin{theorem}
There exists a polynomial-time algorithm that solves the feedback vertex 
set problem on chordal bipartite graphs. 
\end{theorem}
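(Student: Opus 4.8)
The plan is to compute a maximum induced forest of $G$, since its complement is a minimum feedback vertex set. I would drive the computation by a dynamic program over a rooted tree decomposition of $G$ in which every adhesion (the intersection of two adjacent bags) is a minimal separator of $G$. Such a decomposition is of polynomial size: by the corollary following Lemma~\ref{C has bisimplicial} there are only $O(n+m)$ minimal separators, and the embedding machinery of Theorem~\ref{anstee} provides an explicit tree-shaped skeleton on the $B$-side from which the bags can be read off. Crucially, by Lemma~\ref{min sep is CB} every adhesion $S$ induces a complete bipartite graph with parts $S\cap A$ and $S\cap B$, which is what makes the local bookkeeping manageable.

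The structural key is Lemma~\ref{basic}: inside any complete bipartite graph an induced forest keeps at most one vertex on one of the two sides. Applied to an adhesion $S$, this forces the vertices of $S$ that survive into the forest to form a star, with at least one side of $S$ contributing a single survivor (or none). I would encode the DP state at a node as: (i) which side of $S$ is the \emph{small} side, (ii) the at most one surviving vertex on that side, (iii) the subset of survivors on the opposite side, and (iv) the grouping of these survivors into the trees of the forest constructed so far. The star structure is precisely what keeps this polynomial: if the small-side vertex survives it is adjacent to every other survivor of $S$, so they all lie in one tree and (iv) is trivial; if it does not survive, the remaining survivors are pairwise nonadjacent and only their tree-membership must be tracked.

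Processing the tree bottom up, at each node I would combine the optimal partial forests inherited from the children along their shared separators and then decide the status of the newly introduced vertices. Two constraints govern a transition: the local constraint of Lemma~\ref{basic} inside each bag, and global acyclicity. A cycle forms exactly when a merge reconnects two partial trees that already meet in a common separator vertex, so I detect and forbid it by consulting the partition data in (iv); the star structure again bounds the cost of this check by a polynomial. I would invoke the overlap structure of neighborhoods recorded in Lemma~\ref{disjoint} to keep the bookkeeping across adjacent bags consistent, guaranteeing that two hyperedges crossing a separator in distinct single vertices contribute vertex-disjointly and hence cannot secretly close a cycle.

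The step I expect to be the main obstacle is proving that the state space is genuinely polynomial and that restricting to these states loses no optimal solution. A priori the surviving $B$-vertices along a large complete bipartite separator could be distributed among the forest's trees in exponentially many ways. The resolution is exactly Lemma~\ref{basic}: once one side of the separator is pinned to a single survivor, the survivors on the other side interact only through that one vertex, collapsing the connectivity patterns that any optimal forest can realize into a polynomial family. Turning this qualitative collapse into a quantitative bound, and verifying that the transitions never discard an optimal partial forest, is where the real work lies; the remainder is a routine, if careful, bottom-up optimization over the separator tree.
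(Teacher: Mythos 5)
There is a genuine gap, and you have in fact pointed at it yourself: the step you defer as ``where the real work lies'' is the entire content of the theorem. Concretely, your state component (iii) --- ``the subset of survivors on the opposite side'' of a separator $S$ --- ranges over all subsets of $S\cap B$ (say), which is exponential when $S$ is large, and chordal bipartite graphs have minimal separators of unbounded size. Lemma~\ref{basic} does not repair this: it only forces \emph{one} side of the complete bipartite separator down to at most one survivor; the other side may retain any subset of its vertices, and different subsets genuinely lead to different optima further up the tree (which vertices of $S\cap B$ survive determines which cycles through $S$ remain to be broken elsewhere). What collapses this to a polynomial family is not Lemma~\ref{basic} but the nesting of neighborhoods coming from the simple (strongly chordal) elimination ordering: as the paper records in Lemma~\ref{basic 2} and the remark following it, when $N(x_1)\subseteq\cdots\subseteq N(x_\ell)$ one may assume an optimal feedback vertex set contains a \emph{prefix} $x_1,\ldots,x_t$, so only $\ell+1$ thresholds need be considered rather than $2^{\ell}$ subsets. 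Your proposal never invokes this monotonicity, and without it the claimed polynomial bound on the state space is unsupported; likewise the claim that the partition data in (iv) stays polynomial rests on the same unproved reduction.

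A secondary issue is the decomposition itself. You assert that Theorem~\ref{anstee} yields a polynomial-size tree decomposition whose adhesions are exactly the minimal separators; the paper does not do this, and it is not immediate. The paper instead builds a tower of $i$-trees on the hyperedges of a \emph{maximal} embedding $M=(A',B,E)$, keeps multiplicities on hyperedges, and runs the recursion over subtrees rooted at hyperedges, with the case analysis driven by how many copies of the vertex defining a hyperedge $C$ enter the forest and which single vertex $b\in C$ survives (using Lemma~\ref{disjoint} to make the subproblems additive). Your separator-tree skeleton is plausibly equivalent in spirit, but as written it is an unverified substitute for that data structure, and the correctness of combining child tables along adhesions (your acyclicity check in (iv)) is asserted rather than argued. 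In short: the architecture resembles the paper's, but the two ingredients that make it work --- the threshold/prefix structure of optimal solutions on nested neighborhoods and a concrete polynomial-size decomposition to recurse on --- are missing.
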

\begin{proof}
Let $G=(A,B,E)$ be chordal 
bipartite. We describe the algorithm for an 
embedding of $G$ into a maximal chordal bipartite graph 
$M=(A^{\prime},B,E)$. 
Each hyperedge, defined as the neighborhood of a 
vertex $x$ in $A^{\prime}$, 
has a multiplicity, which is either zero if $x \not\in A$  
or else it is 
the number of vertices $x^{\prime}$ with 
\[N(x^{\prime})=N(x).\]  

We decompose $M_B^{\ast}$ as follows. 
First consider the hyperedges with a maximal number of vertices. 
Say these hyperedges have $k+1$ vertices. The subgraph of 
$M_B^{\ast}-A$ is a $k$-tree $H_k$ and the maximal cliques 
are the hyperedges with $k+1$ vertices. Decompose $H_k$ as described 
in Section~\ref{maximal CBG}. 

Next consider the hyperedges with $k$ vertices. These define a 
$(k-1)$-tree. For each maximal clique $C$ in $H_k$ consider the 
vertices $V_C$ in the subtree rooted at $C$. Decompose the subgraph 
of $M_B^{\ast}-A$ induced by $V_C$ into a $(k-1)$-tree, 
rooted at $C$. 

Continue this decomposition using the $i$-trees 
for $i=k,\ldots,1$. 

Let $C$ be a hyperedge and let $k+1=|C|$. 
The subtree $G_C$ of $C$ is the collection of hyperedges 
contained in the graph induced by the vertices in the subtree of the 
$k$-tree rooted at $C$ (including $C$). Note that some of these 
hyperedges may have cardinality larger than $k+1$. Let $A_C$ denote 
the vertices $x$ in $A$ such that $N(x)$ is a hyperedge in the 
subtree rooted at $C$.   

\bigskip
Consider a hyperedge $C=N(x)$ for some $x \in A$. 
We consider the following cases for the shape of a maximal forest 
in the graph 
induced by the vertices in the subtree rooted at $C$ and prove that 
the maximal cardinality can be determined in polynomial time. 

First assume that 
a maximum forest $T$ contains the vertex $x$ and no other copies of 
$x$. 
If a subtree 
of $T$ that contains $x$ has vertices outside $C$ then it contains a 
vertex $y \in A_C$ such that $|N(y) \cap C|=1$. 
Consider the vertices $b \in C$ and let $Q_b$ be the 
set of vertices $y \in A_C$ 
with  $N(y) \cap C=\{b\}$. 
Consider a maximal subtrees of $C$ with a  
root $C^{\prime}$, 
such that for all  
vertices $z \in A_{C^{\prime}}$ 
the following holds. 
\[|N(z) \cap C| \leq 1 \quad\text{and}\quad 
\left(|N(z) \cap C|=1 \quad\Rightarrow\quad  N(z) \cap C=\{b\}\right).\]     
Find the maximal forest in the subtree rooted 
at $C^{\prime}$ by a table look-up. 
By Lemma~\ref{disjoint}, we may add up the cardinalities of all 
these forests to find a maximal forest rooted at $x$. 

Assume that the multiplicity of $x$ is more than one. Consider a 
forest that contains more than one copy of $x$. Then it contains 
at most one vertex in $C$. Let $b \in C$ and assume that the forest 
contains the vertex $b$. Let $C^{\prime}$ be the root of the 
maximal subtree as defined above. By table look-up we may find the 
maximum forest in the subtree rooted at $C^{\prime}$. 

The vertices in $C-b$ are in the feedback vertex set. To find the 
maximum forest in the subgraph induced by the vertices in the 
subtree rooted at $C$ that are not in the subtree rooted at $C^{\prime}$ 
we proceed as follows. Remove the vertices of $C-b$. If we remove 
the corresponding 
columns from the maximal bipartite adjacency matrix, the new matrix is,  
obviously, totally balanced and the intersection of 
any two rows is a row in the reduced matrix. It follows 
that we can use the same data structure, except that the 
multiplicities of some of the hyperedges changes. 
If a hyperedge contains a vertex that is in the feedback vertex set, 
then the multiplicity of that hyperedge becomes zero. 
Consider a hyperedge that does not contain any vertices in the 
feedback vertex set. Increase the multiplicity by the number of 
vertices $x \in A$ for which the reduced neighborhood 
is exactly that hyperedge. Some vertices $x \in A$ may have their 
neighborhood contained in the feedback vertex set. That is, 
the row of $x$ in the 
reduced matrix becomes zero. These vertices become isolated 
and can be added to any maximal forest.  
Rerun the dynamic programming 
algorithm for the subtrees rooted at $C$ with the new multiplicities,  
except for the subtree rooted at 
$C^{\prime}$. Add up the cardinalities of these maximum forests 
and maximize over the choices of $b \in C$.   

For hyperedges in the subtree of $C$ we update the maximum forest 
in the same manner. Let $Q$ be a hyperedge in the subtree rooted at 
$C$ and let $q \in A$ be such that $N(q)=Q$. 
Some hyperedges in the subtree of $C$ that 
are not in the subtree rooted at $Q$, intersect $Q$ in one vertex. 
Update the maximal cardinality of a maximal forest that contains $q$ 
as described above.      

Consider the case where a maximum forest does not contain $x$. 
Decrease the multiplicity of $C$ by one and update the 
values for all hyperedges in the subtree of $C$ as described above.  
Let $|C|=k+1$. Then $C$ contains exactly two hyperedges $C_1$ and 
$C_2$ of 
cardinality $k$. Let $C_1^{\prime}$ be a hyperedge of maximal 
cardinality in the subtree rooted at $C_1^{\prime}$ and consider 
the decomposition tree rooted at $C_1^{\prime}$ induced by the 
hyperedges contained in the subtree of $C$.   

Note that the number of calls 
to each hyperedge from one of its 
ancestors is bounded by a fixed polynomial.  
This proves that the algorithm terminates in a polynomial 
number of steps. 
\qed\end{proof} 
      
\begin{lemma}
\label{basic 2}
Let $G=(A,B,E)$ be a bipartite graph and let $x$ and $y$ be two 
vertices in $A$. Assume that $N(x) \subseteq N(y)$. 
Assume that there exists a feedback vertex set $F$ in $G$ 
with $x \in F$ and $y \not\in F$. Let 
\[F^{\prime}=(F-x)+y.\] 
Then $F^{\prime}$ is also a feedback vertex set. 
\end{lemma}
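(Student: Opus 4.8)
The plan is to analyze the graph $G-F'$ directly and compare it with the forest $G-F$. Write $V=A+B$ for the vertex set. Since $x\in F$ and $y\not\in F$, the first observation is that
\[
G-F' \;=\; G-((F-x)+y) \;=\; (G-F)-y+x,
\]
so that $G-F'$ is obtained from the forest $G-F$ by deleting the vertex $y$ and reinserting $x$ together with its incident edges. I would begin by pinning down the neighbourhood of $x$ inside $G-F'$, which I denote $N_{G-F'}(x)$. Because $x$ and $y$ both lie in the independent color class $A$, and $N(x)\subseteq B$, neither $x$ nor $y$ belongs to $N(x)$; hence deleting $y$ and adding $x$ does not affect which neighbours of $x$ survive, giving $N(x)\cap(V-F')=N(x)\cap(V-F)$. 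Combining this with the hypothesis $N(x)\subseteq N(y)$ yields the containment on which everything rests:
\[
N_{G-F'}(x) \;=\; N(x)\cap(V-F) \;\subseteq\; N(y)\cap(V-F) \;=\; N_{G-F}(y).
\]

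Next I would argue by contradiction, assuming $G-F'$ contains a cycle. Deleting $x$ from $G-F'$ leaves exactly $(G-F)-y$, which is a subgraph of the forest $G-F$ and is therefore acyclic; consequently every cycle of $G-F'$ must pass through $x$. Fix such a cycle and let $u$ and $w$ be the two neighbours of $x$ along it, which are distinct since a cycle has at least three vertices. By the containment above, both $u$ and $w$ lie in $N_{G-F}(y)$, so in $G-F$ the vertex $y$ is adjacent to each of $u$ and $w$. Deleting $x$ from the cycle leaves a path $P$ from $u$ to $w$ that avoids $x$, and since $P$ lives inside $(G-F)-y$ it also avoids $y$.

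The contradiction then comes from transferring this configuration back into $G-F$: the path $P$ together with the two edges from $y$ to $u$ and from $y$ to $w$ forms a cycle in $G-F$, where $u\neq w$ guarantees a genuine cycle. This is impossible because $G-F$ is a forest, so $G-F'$ has no cycle and $F'$ is a feedback vertex set. I expect the main obstacle to be the reduction step, namely arguing cleanly that an arbitrary cycle of $G-F'$ is forced through $x$ and can then be rerouted through $y$; the remainder is bookkeeping, and the one subtlety worth stating explicitly is that $N(x)\cap(V-F')=N(x)\cap(V-F)$, which hinges on $x$ and $y$ sharing the color class $A$.
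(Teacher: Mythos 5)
Your proof is correct and follows essentially the same route as the paper's: any cycle of $G-F'$ must pass through $x$ (since $(G-F')-x=(G-F)-y$ is acyclic), and can then be rerouted through $y$ using $N(x)\subseteq N(y)$ to produce a cycle in the forest $G-F$, a contradiction. You merely spell out the bookkeeping (the identification of $N_{G-F'}(x)$ and the distinctness of $u$ and $w$) that the paper leaves implicit.
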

\begin{proof}
Assume not. Let $C$ be an induced cycle in $G-F^{\prime}$. 
Then $x \in C$ otherwise $C$ is an induced cycle 
in $G-F$. Let 
\[C^{\prime}=(C-x)+y.\] 
Then $C^{\prime}$ is a cycle in $G-F$ and 
obviously, $C^{\prime}$ contains an 
induced cycle in $G-F$. This is a contradiction. 
\qed\end{proof}

Let $G=(A,B,E)$ be a chordal 
bipartite graph and let $x \in A$ be a simple vertex in 
$G_B$. Let 
$[x_1,\ldots,x_{\ell}]$ 
be an ordering of $N_G(x)$ such that 
\[\text{for $i=1,\ldots,\ell-1$,} \quad N_G(x_i) \subseteq N_G(x_{i+1}).\] 
It follows from Lemma~\ref{basic 2} 
that there is a minimum feedback vertex set $F$ 
such that, for some threshold $t \in \{0,\ldots,\ell\}$, 
\[\forall_{1 \leq i \leq \ell} \;\;
x_i \in F \quad\text{if and only if}\quad i \leq t.\]

\section{Concluding remarks}
%%%%%%%%%%%%%%%%%%%%%%%%%%%%

Define the chordality of a graph $G$ as the length of a longest 
induced cycle in $G$. We are not aware of any class of graphs of 
bounded chordality on which the feedback vertex set problem 
is NP-complete.

\end{document}